\date{}
\newtheorem{theorem}{Theorem}
\newtheorem{corollary}[theorem]{Corollary}
\newtheorem{lemma}[theorem]{Lemma}
\newtheorem{remark}[theorem]{Remark}
\numberwithin{equation}{section}
\numberwithin{theorem}{section}
\newcommand{\keywords}[1]{\par\noindent\textbf{Keywords:} #1}
\newcommand{\subjclass}[2]{
  \par\noindent\textbf{Mathematics Subject Classification} #2}
\begin{document}

\title{Liouville type results for semilinear elliptic equation and inequality on pseudo-Hermitian manifolds }

\author{ Biqiang Zhao\footnote{
		E-mail addresses: 2306394354@pku.edu.cn}}
            
\maketitle
\setlength{\parindent}{2em}

\begin{abstract}
In this paper, we study the semilinear elliptic equation and inequality on pseudo-Hermitian manifolds. In particular, we first obtain a Liouville theorem for the equation
$ \Delta_b u+F(u)=0$ based on a generalized Jerison-Lee’s formula. Next, we prove the nonexistence of a positive solution to the inequality $\Delta_b u+F(u)\leq 0 $ under the volume estimate.

\end{abstract}

\keywords{ Liouville theorem, CR Yamabe problem, semilinear elliptic equation, semilinear differential inequality }
\subjclass [{  32V20 · 35J61 · 35R03 · 53C25 }


\section{Introduction}
\label{1}
     In this paper, we study the Liouville type theorems for solutions of semilinear equation and differential inequality on pseudo-Hermitian Manifolds, which generalize the result in \cite{MO} and \cite{GSun}.
     \par
      Recalling the seminal paper \cite{GS}, Gidas and Spruck showed that the only nonnegative solution of the equation
     \begin{align}
         \Delta u+u^\sigma=0\quad in\ \mathbb{R}^n,\quad n>2
     \end{align}
      is $u\equiv 0$ if $ 1<\sigma<\frac{n+2}{n-2}$. On the other hand, if $\sigma= \frac{n+2}{n-2}$, Caffarelli, Gidas and Spruck \cite{CGS} (see also \cite{LZ}) proved that any positive solution to (1.1) is given by 
      \begin{align*}
          u(x)= \left( \frac{1}{a+b|x-x_0|^2}\right)^{\frac{n-2}{2}},
      \end{align*}
      where $a,b>0,\ ab=\frac{1}{n(n-2)} $ and $x_0\in \mathbb{R}^n$. Moreover, Gidas and Spruck also studied the following general equation
      \begin{align}
          \Delta u+F(u)=0\quad in\ \mathbb{R}^n,\quad n>2.
      \end{align}
       Later, Li and Zhang \cite {LZ} studied the equation (1.2) and proved that: If $F$ is locally bounded in $ (0,\infty)$ and $F(s)s^{-\frac{n+2}{n-2}}$ is non-increasing in $ (0,\infty)$. Let $u$ be a positive classical solution of (1.2) with $n\geq 3$, then either for some $b>0$
      \begin{align*}
          bu=\left (  \frac{\mu}{1+\mu ^2|x-\bar{x}|^2}\right )^{\frac{n-2}{2}}
      \end{align*}
      for $\mu >0,\bar{x}\in \mathbb{R}^n$ or $u\equiv a$ for some $a > 0$ such that $F(a) = 0$. In \cite{SZ}, Serrin and Zou extended the above considerations to quasilinear elliptic equations. Since then many improvements or generalizations of such Liouville type theorems have been developed to more general elliptic operators on Euclidean spaces or Riemannian manifolds, see e.g. \cite{CM,EP1,EP2,MMP,GSXX,SZ,HSZ,CMR} and the references therein.
      \par
      Such Liouville type theorems have also been generalized for subelliptic operators on sub-Riemannian manifolds including pseudo-Hermitian manifolds. The outstanding work \cite{JL,JL1,JL2} of Jerison and Lee studied the so-called CR-Yamabe problem and stimulated us to study the following equation 
      \begin{align}
          \Delta_b u+u^\sigma = 0,\ in \ \mathbb{H}^n.
      \end{align}
      Inspired by the work of Gidas and Spruck, a natural conjecture is that the nonexistence result for (1.3) holds if $1<\sigma<1+\frac{2}{n}$. In \cite{BP}, Birindelli and Prajapat proved the conjecture for the cylindrical solution via the moving plane method. In \cite{Yu}, Yu studied the nonnegative cylindrical solution of the equation $\Delta_b u+F(u)=0$ and established a Liouville type theorem. For the general solution, in \cite{Xu}, Xu proved that (1.3) possesses no nontrivial nonnegative solution providing $n>1 $ and $1<\sigma<\frac{(2n+2)(2n+4)}{(2n+1)^2}$. Motivated by Jerison-Lee’s differential identity, Ma and Ou \cite{MO} gave a complete classification of nonnegative solutions to equation (1.3) when $1<\sigma<1+\frac{2}{n}$. For the critical case $\sigma=1+\frac{2}{n}$, due to the work \cite{JL} of Jerison and Lee, there are positive solutions of (1.3) given by
      \begin{align*}
          \mathcal{U}:=\frac{C}{|t+i|z|^2+z\cdot \mu +\lambda|^n},
      \end{align*}
      for some $C>0,\ \lambda\in \mathbb{C},\ \mu\in \mathbb{C}^n$ such that $ \mathrm{Im}(\lambda)>\frac{|\mu|^2}{4}$. Moreover, Jerison and Lee obtained that $\mathcal{U}$ is the unique positive solution satisfying the finite energy assumption (see also \cite{CLMR}).
      \par
      The first purpose of this paper is to study the nonnegative solution of the equation
      \begin{align}
          \Delta_b u+2n^2F(u)=0,
      \end{align}
      which is defined on a complete Sasakian manifold $M^{2n+1}$. Before introducing the result, we give some definitions. A function $f\in C^0([0,\infty))\cap C^1((0,\infty))$ is called subcritical with exponent $\sigma$ if 
        \begin{align*}
            f(t)\geq 0,\quad \sigma f(t)-tf^{'}(t)\geq 0,\quad   \forall t>0.
        \end{align*}
        It is easy to verify that the following functions are subcritical with exponent $\sigma$
        \begin{align*}
               \quad f(t)=t^\sigma, \quad  f(t)=t^\sigma+t^\kappa, \quad  f(t)=\frac{t^\sigma}{1+t^\kappa},
        \end{align*}
        where $\sigma>\kappa>0 $. Obviously, $f$ is a subcritical function with exponent $\sigma$ implying that $ t^{-\sigma}f(t)$ is non-increasing in $(0,\infty)$. We denote by $ B_R$ the ball of radius $R$ centered at a ﬁxed point with respect to the Carnot-Carath$\acute{\mathrm{e}}$odory distance (see Section 2) and by $V(R)$ the volume of $B_R$. The following theorems generalize the result in \cite{MO}.
        \begin{theorem}
            Let $(M^{ 3}, HM, J,\theta)$ be a complete noncompact Sasakian manifold with nonnegative Tanaka-Webster scalar curvature and assume that $F$ is a subcritical function with exponent $\sigma\in (1,3)$. If there exists a constant $1<\kappa\leq \sigma$ such that
          \begin{align*}
              tF^{'}(t)-\kappa F(t)\geq 0,\ \forall t\geq 0,
          \end{align*}
          then every positive solution to (1.4) is constant.
        \end{theorem}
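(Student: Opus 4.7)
The plan is to adapt Ma-Ou's argument on the Heisenberg group to the three-dimensional Sasakian setting, exploiting crucially the vanishing of the pseudo-Hermitian torsion. I would begin by introducing an auxiliary function $v=u^{-\alpha}$ for some exponent $\alpha=\alpha(\sigma,\kappa)>0$ to be optimized later; equation (1.4) for $u$ becomes a quasilinear equation for $v$ with nonlinear first-order terms. The heart of the argument is a pointwise differential identity of Jerison-Lee type, schematically
\begin{equation*}
\mathrm{div}_{b}(X)\;=\;|D|^{2}\;+\;R_{\theta}\,\Phi_{0}\;+\;\bigl(\sigma F(u)-uF'(u)\bigr)\Phi_{1}\;+\;\bigl(uF'(u)-\kappa F(u)\bigr)\Phi_{2},
\end{equation*}
where $X$ is an explicit horizontal vector field, $R_{\theta}$ denotes the Tanaka-Webster scalar curvature, $|D|^{2}$ is a nonnegative quadratic form in the second horizontal derivatives of $v$, and $\Phi_{0},\Phi_{1},\Phi_{2}$ are nonnegative expressions in $u$ and $\nabla_{b}u$. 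Under the curvature hypothesis $R_{\theta}\geq 0$ together with the subcritical assumptions $\sigma F\geq uF'$ and $uF'\geq \kappa F$, every summand on the right is pointwise nonnegative.

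Next, I would multiply the identity by a Lipschitz cutoff $\eta_{R}$ supported in $B_{2R}$ with $\eta_{R}\equiv 1$ on $B_{R}$ and $|\nabla_{b}\eta_{R}|\leq C/R$, and integrate over $B_{2R}$. The cross terms produced by $\nabla_{b}\eta_{R}$ are handled by Cauchy-Schwarz together with a Bishop-type volume comparison for $V(R)$ (available under $R_{\theta}\geq 0$ in dimension three), and the exponent $\alpha$ is tuned so that the boundary contributions vanish as $R\to\infty$. Passing to the limit one obtains a global integral identity whose right-hand side is a sum of nonnegative integrands; hence each integrand vanishes identically. In particular, combining the vanishing of $|D|^{2}$ with $(uF'(u)-\kappa F(u))\Phi_{2}\equiv 0$ and the strict bound $\kappa>1$ forces $\nabla_{b}u\equiv 0$ on $M$. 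By the Chow-Rashevskii bracket-generating theorem the function $u$ is then constant.

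The main obstacle will be the derivation of the generalized Jerison-Lee identity itself. In Ma-Ou the choice $F(u)=u^{\sigma}$ produces clean algebraic cancellations, whereas for a general subcritical $F$ one must track cross terms involving $F'(u)$ and $F''(u)$ and arrange the final algebraic rearrangement so that the remainder splits into the two nonnegative combinations $\sigma F-uF'$ and $uF'-\kappa F$ with nonnegative weights $\Phi_{1},\Phi_{2}$. The Tanaka-Webster curvature correction, absent on the Heisenberg group, is produced by commuting horizontal Hessians using the formulas valid on a Sasakian manifold with vanishing torsion; it must be verified to enter with a nonnegative coefficient $\Phi_{0}$ so that the sign assumption $R_{\theta}\geq 0$ is actually usable. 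Once this pointwise identity is in hand, the integration and cutoff step proceeds by now-standard techniques.
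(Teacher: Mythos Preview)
Your overall strategy coincides with the paper's: a Jerison--Lee type divergence identity (the paper's Lemma~2.1, obtained after the logarithmic substitution $e^{f}=u$ in dimension three), whose right-hand side is arranged into nonnegative pieces using the subcriticality $\sigma F\geq uF'$, the lower bound $uF'\geq\kappa F$ (which in the paper is what allows the auxiliary parameter $s_{0}$ in (3.1)--(3.2) to be chosen so that every coefficient is positive), and $R_{\theta}\geq 0$; then integration against a cutoff and the volume bound $V(R)\leq CR^{4}$.

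The genuine gap is in your sentence ``the exponent $\alpha$ is tuned so that the boundary contributions vanish as $R\to\infty$.'' After integrating the divergence of $X$ against $\eta_{R}$, the resulting boundary terms are \emph{not} controlled by $V(2R)/R^{k}$ alone: they contain factors like $|\partial f|^{5}$, $|\partial f|^{4}$, $H(f)^{2}$, and, crucially, the Reeb-direction quantity $f_{0}^{2}$, none of which you have any a~priori integral bound for. In the paper this is handled not by a choice of exponent but by a bootstrap: auxiliary integral estimates (Lemma~3.1, equations~(3.3)--(3.4)) bound $\int\varphi^{s-2}f_{0}^{2}$ and $\int\varphi^{s}H^{3}$ in terms of $\int\varphi^{s}\mathcal{M}$ plus lower-order errors, and then repeated Young inequalities (equations~(3.7)--(3.10), (3.22)) absorb every error back into the left-hand side, leaving only $CR^{-6}V(2R)\leq CR^{-2}$. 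Without this absorption step the argument does not close. Your sketch also omits any treatment of $f_{0}$; on a Sasakian manifold the commutator $[\eta_{\alpha},\eta_{\bar\alpha}]$ produces $f_{0}$-terms that do not appear in a purely horizontal computation and must be estimated separately (this is exactly the content of (3.3)).

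A smaller difference: the paper does not conclude $\nabla_{b}u\equiv 0$ directly from the vanishing of the integrand. It first deduces $\int_{M}H(f)^{3}=0$, hence $F(u)\equiv 0$, so $\Delta_{b}u=0$, and then invokes a Liouville theorem for positive pseudoharmonic functions under nonnegative curvature. Your route (all nonnegative summands vanish, in particular the $|\partial f|^{6}$ term) would also work once $\int_{M}\mathcal{M}=0$ is established, but establishing that equality is precisely the bootstrap above.
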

        In the case $n\geq 2$, we have a similar result under the volume estimate.
        \begin{theorem}
           Let $(M^{ 2n+1}, HM, J,\theta),\ n\geq 2$ be a complete noncompact Sasakian manifold with $Ric_b\geq 0 $ and 
           \begin{align*}
               V(R)\leq C R^{2n+2}
           \end{align*}
           for some $C > 0$ and every $R > 0$ large enough.
           Assume that $F$ is a subcritical function with exponent $\sigma\in (1,1+\frac{2}{n})$. If there exist constants $\frac{4}{n}-3<\kappa\leq \sigma$ and $\lambda>1$ such that
          \begin{align*}
              tF^{'}(t)-\kappa F(t)\geq 0,\ \forall t>0,
          \end{align*}
          and 
          \begin{align}
              \liminf\limits_{t\xrightarrow{} \infty} t^{-\lambda}F(t)>0,
          \end{align}
          then every positive solution to (1.4) is constant.
          \begin{remark}
              If $\kappa>1$ in Theorem 1.2, then condition (1.5) is not necessary.
          \end{remark}
           \end{theorem}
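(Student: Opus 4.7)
The plan is to follow the strategy of Theorem 1.1 and its Heisenberg analogue due to Ma--Ou, but to replace the curvature-sign arguments with weighted integral estimates controlled by the polynomial volume hypothesis $V(R)\le CR^{2n+2}$. The cornerstone should be the generalized Jerison--Lee differential identity that this paper develops for the equation $\Delta_b u+2n^2F(u)=0$ on a Sasakian manifold. For a positive solution $u$, one constructs (out of $u$, $|\nabla_b u|_\theta$, the Reeb derivative $Tu$, and $F(u)$) an auxiliary function $P$ such that, after invoking the pseudo-Hermitian Bochner formula together with the subcritical condition $\sigma F(t)-tF'(t)\ge 0$, the lower monotonicity $tF'(t)-\kappa F(t)\ge 0$, and $\mathrm{Ric}_b\ge 0$, one arrives at a pointwise inequality of the form
\begin{align*}
\Delta_b P \ \ge\ \mathcal{Q},
\end{align*}
where $\mathcal{Q}\ge 0$ is a nonnegative quadratic in the second-order horizontal jet of $u$ that vanishes only when $u$ is constant.

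I would then proceed by a weighted cutoff argument: test the displayed inequality against $u^{-\gamma}\eta^{2p}$, where $\eta$ is a Lipschitz cutoff with $\eta\equiv 1$ on $B_R$, $\mathrm{supp}\,\eta\subset B_{2R}$ and $|\nabla_b\eta|\le C/R$, and where $\gamma$ and $p$ are dimension-dependent exponents tuned by the interplay of $\sigma$, $\kappa$ and $n$ (the restrictions $\sigma<1+2/n$ and $\kappa>4-3n$ should be exactly what permit a valid choice). After integration by parts the left-hand side becomes an interior term dominating $\int \mathcal{Q}\,u^{-\gamma}\eta^{2p}\,dV_\theta$, minus a boundary term supported on $B_{2R}\setminus B_R$. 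Cauchy--Schwarz and $|\nabla_b\eta|\le C/R$ reduce the boundary term to
\begin{align*}
C R^{-2}\int_{B_{2R}\setminus B_R} u^{-\gamma}|\nabla_b u|^{2p}\,dV_\theta,
\end{align*}
times bounded auxiliary factors. Using the volume bound $V(R)\le CR^{2n+2}$ together with a uniform $L^\infty$-bound on $u$ (see below), this quantity is at most $CR^{2n}\|u\|_\infty^{m}$, which combined with the extra $R$-decay built into $P$ (coming from the subcritical gap $1+2/n-\sigma>0$) tends to zero as $R\to\infty$. Hence $\mathcal{Q}\equiv 0$, which in turn forces $|\nabla_b u|\equiv 0$.

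The main obstacle is securing the a priori $L^\infty$-bound on $u$, which is what makes the boundary estimate quantitative; this is precisely where hypothesis (1.5) enters. I would invoke a Keller--Osserman-type comparison: since $\mathrm{Ric}_b\ge 0$, the sub-Laplacian comparison theorem on Sasakian manifolds yields $\Delta_b r^2\le C$ for $r$ the Carnot--Carath\'eodory distance to a fixed point, so the radial barriers $\Phi_R(x)=A\bigl(R^2-r(x)^2\bigr)^{-s}$ satisfy $\Delta_b\Phi_R+c_0\Phi_R^{\lambda}\ge 0$ inside $B_R$ for suitably chosen $A,s>0$. Since (1.5) forces $F(t)\ge c_0 t^{\lambda}$ for large $t$, a standard comparison argument on $B_R$ gives $u\le \Phi_R$, and letting $R\to\infty$ with $x$ fixed produces the required uniform bound on $u$. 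When $\kappa>1$, the combined subcritical and monotonicity hypotheses already absorb the superlinear contribution from $F(u)$ in the construction of $P$ itself, so no prior $L^\infty$-bound on $u$ is needed; this should explain the accompanying remark. The subtlest point throughout will be the dimension bookkeeping of the exponents inside $P$, which must simultaneously handle the full ranges $\sigma\in(1,1+2/n)$ and $\kappa\in(4-3n,\sigma]$ while keeping all interior coefficients nonnegative.
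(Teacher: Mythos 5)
Your outline gets the broad architecture right---the generalized Jerison--Lee divergence identity, integration by parts against a cutoff, and the volume bound closing the argument---but your treatment of hypothesis (1.5) is a genuine misstep, and it is precisely the step that makes the $n\ge 2$ case go through.

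You propose to use (1.5) to extract an a priori $L^\infty$ bound on $u$ via a Keller--Osserman barrier of the type $\Phi_R=A(R^2-r^2)^{-s}$. That argument applies to \emph{subsolutions} $\Delta_b u\ge c_0 u^\lambda$, but here $u$ solves $\Delta_b u=-2n^2F(u)\le 0$, so $u$ is $\Delta_b$-\emph{super}harmonic and the comparison runs the wrong way; the barrier method does not yield an upper bound on $u$. (Moreover, $\Phi_R(x)\to 0$ as $R\to\infty$ at fixed $x$, so if that comparison actually held it would force $u\equiv 0$, which is clearly too strong.) In the paper's proof no $L^\infty$ bound on $u$ is used at all. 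What (1.5) actually delivers is a \emph{lower} bound $F(t)\ge Ct^\lambda$ for $t\ge t_0$, hence $H(f)=F(e^{nf})e^{-nf}\ge Ce^{n(\lambda-1)f}$ for $f\ge f_0$ (and the subcriticality hypothesis gives $H(f)\ge Ce^{n(\sigma-1)f}$ for $f\le f_0$). These lower bounds let one insert bounded factors of the form $(e^{-af}H(f))^{\mathrm{power}}$ with $a=n(\lambda-1)$ or $a=n(\sigma-1)$ to rebalance exponents and dominate the conformal weight $e^{2(n-1)f}$ appearing in the integral estimates for $\mathcal{M}$; Young's inequality then converts terms like $R^{-2}\int\varphi^{s-2}e^{2(n-1)f}H^2(f)$ into a small multiple of $\int\varphi^s e^{2(n-1)f}H^3(f)$ plus $R^{-2(2(n-1)+3a)/a}\int\varphi^{s-2(2(n-1)+3a)/a}$, whose exponent on $R$ is strictly negative once $0<a<2$, so the volume bound $V(R)\le CR^{2n+2}$ kills it. When $n=1$ this weight is absent, which is why Theorem 1.1 does not need (1.5). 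Your explanation of the remark is also off: the point is that $tF'(t)\ge\kappa F(t)$ with $\kappa>1$ directly integrates to $F(t)\ge Ct^\kappa$ for $t$ large, so (1.5) holds automatically with $\lambda=\kappa$.

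Two smaller points: the test weight is simply $\varphi^s$, not $u^{-\gamma}\eta^{2p}$; the factor $e^{2(n-1)f}=u^{2(n-1)/n}$ is already built into the definition of $\mathcal{M}$. And your boundary estimate hand-waves past the gradient factors $|\nabla_b u|^{2p}$; the paper needs the auxiliary integral estimates of Lemma 3.1 (controlling $\int\varphi^{s-2}f_0^2 e^{2(n-1)f}$ and $\int\varphi^s e^{2(n-1)f}H^3(f)$) precisely to absorb those terms before the volume bound can be invoked.
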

          On Sasakian manifolds, we recall the following volume comparison theorem (cf. \cite{LL}).
         \begin{theorem}
             Let $(M^{ 2n+1,} HM, J,\theta)$ be a complete noncompact Sasakian manifold.
             \\
             (1) If $n=1$ and the Tanaka-Webster scalar curvature is nonnegative, then
             \begin{align*}
                 V(R)\leq C R^{4}, \quad \forall R>1,
             \end{align*}
             for some $C>0.$
             \\
             (2) If $n\geq 2$ and the Tanaka-Webster curvature satisfies
             \begin{align}
                 Ric_b(X,X)\geq R(X,JX,JX,X)\geq 0,\quad \forall X\in HM,
             \end{align}
             then 
             \begin{align*}
               V(R)\leq C R^{2n+2},\quad \forall R>1
           \end{align*}
           for some $C > 0$.
         \end{theorem}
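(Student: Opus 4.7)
The plan is to establish a sub-Riemannian Laplacian comparison theorem for the Carnot-Carath\'eodory distance from a fixed basepoint, and then integrate it to obtain the polynomial volume growth. Let $r(\cdot) = d_{CC}(p_0,\cdot)$ denote the distance from a fixed $p_0 \in M$. I would aim to prove the distributional inequality
\begin{equation*}
    \Delta_b r \leq \frac{2n+1}{r}
\end{equation*}
valid outside the sub-Riemannian cut locus of $p_0$. Granted this, the divergence theorem applied on the annulus $B_R \setminus B_\epsilon$, combined with the co-area formula, reduces the estimate to a Gr\"onwall-type ODE for $V(R)$, whence $V(R) \leq CR^{2n+2}$ for $R > 1$.

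To derive the Laplacian comparison, I would use the CR-Bochner identity on a Sasakian manifold,
\begin{equation*}
    \tfrac12 \Delta_b |\nabla_b u|^2 = |\nabla_b^2 u|^2 + \langle \nabla_b u, \nabla_b \Delta_b u\rangle + 2\,\mathrm{Ric}_b(\nabla_b u, \nabla_b u) + (\text{$J$- and Reeb-correction terms}),
\end{equation*}
applied to smooth approximations of $r^2/2$. Restricting to a minimizing normal sub-Riemannian geodesic converts the pointwise Bochner inequality into a matrix Riccati inequality along the associated horizontal Jacobi fields, from which an upper bound of the form $(\Delta_b r)(s) \leq (2n+1)/s$ at parameter $s$ emerges after a standard trace argument.

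In case (1), since $n = 1$ forces $HM$ to be two-dimensional, for any unit horizontal $X$ the pair $\{X, JX\}$ already spans $HM$, so all curvature quantities appearing in the Riccati equation reduce to multiples of the Tanaka-Webster scalar curvature $W$; nonnegativity of $W$ then yields $\Delta_b r \leq 3/r$ and the bound $V(R) \leq CR^4$. In case (2), with $n \geq 2$, the Bochner identity produces additional quadratic terms in $J\nabla_b r$ involving $R(X, JX, JX, X)$, arising from the $J$-invariant planes; the hypothesis $R(X, JX, JX, X) \geq 0$ precisely controls these, while $\mathrm{Ric}_b \geq 0$ handles the trace part, so together they produce the same $\Delta_b r \leq (2n+1)/r$.

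The main obstacle will be the non-smoothness of $r$ and the degeneracy of $\Delta_b$ along the Reeb direction, which complicates both the Bochner computation and the integration against the singular distance. A rigorous treatment should follow one of two routes implicit in the cited references: either a direct analysis of horizontal Jacobi fields along normal sub-Riemannian geodesics via the CR exponential map and a careful handling of abnormal minimizers and the sub-Riemannian cut locus, or, alternatively, a generalized curvature-dimension inequality of Baudoin-Garofalo type, which bypasses direct analysis of $r$ altogether by deriving volume doubling through $\Gamma$-calculus and parabolic Li-Yau estimates; volume doubling combined with local volume bounds then yields the claimed polynomial growth.
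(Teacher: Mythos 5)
The paper does not actually prove this statement: it explicitly recalls Theorem~1.4 from the literature and refers the reader to the references \cite{AL,BGKT,HK,LL} for details. So there is no ``paper's own proof'' to compare your proposal against; what can be assessed is whether your sketch would survive as an independent argument.

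Your primary route -- deducing $\Delta_b r \leq (2n+1)/r$ for the Carnot--Carath\'eodory distance $r$ from the CR-Bochner formula applied to smooth approximations of $r^2/2$, then integrating -- has a genuine gap. First, the CR-Bochner identity for $\Delta_b |\nabla_b u|^2$ on a Sasakian manifold carries extra terms in the Reeb derivatives $u_0$ and $u_{00}$ (and cross terms like $\langle J\nabla_b u, \nabla_b u_0\rangle$), and for $u=r$ these terms are neither zero nor controlled in sign by the hypotheses on $\mathrm{Ric}_b$ or $R(X,JX,JX,X)$; you cannot drop them and pass to a clean horizontal Riccati inequality the way one does in the Riemannian setting. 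Second, the conversion ``pointwise Bochner $\Rightarrow$ Riccati along normal geodesics'' is not automatic in sub-Riemannian geometry: normal geodesics are projections of the Hamiltonian flow on $T^*M$, the relevant second-order object is the Jacobi curve in the symplectic vector space, and the horizontal Hessian of $r$ does not evolve by the tidy ODE that your proposal assumes. Third, and most concretely, the claimed rate $(2n+1)/r$ is already wrong in the flat model: for $\mathbb{H}^n$, the sharp measure-contraction/Jacobian comparison exponent is $2n+3$, not $2n+2$, so the best a comparison of this type could give is $V(R)\leq CR^{2n+3}$; the sharp exponent $2n+2$ in the Heisenberg group comes from dilation invariance, not from a Riccati estimate on $r$. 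This means the main plank of your argument cannot be correct as stated even in the most favorable case.

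Your fallback -- the Baudoin--Garofalo generalized curvature-dimension inequality, volume doubling via $\Gamma$-calculus and Li--Yau parabolic estimates -- is indeed the approach taken for $n\geq 2$ in the cited literature, and for $n=1$ the Agrachev--Lee type argument via canonical frames and curvature invariants along the Jacobi curve in $T^*M$ is what is used (not a Bochner-on-$r$ computation). If you want a rigorous proposal, it would be better to develop one of those two routes directly rather than to present the Laplacian-comparison route as the primary line and relegate the workable methods to a remark at the end.
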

        From Theorem 1.4, we have the following corollary.
        \begin{corollary}
            Let $(M^{ 2n+1}, HM, J,\theta),\ n\geq 2$ be a complete noncompact Sasakian manifold with curvature condition (1.6). Assuming that $F$ satisfies the same conditions as in Theorem 1.2, then every positive solution to (1.4) is constant.
        \end{corollary}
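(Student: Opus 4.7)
The plan is to observe that Corollary 1.5 is essentially a direct consequence of Theorem 1.2 combined with part (2) of Theorem 1.4, and that no new analytic input is required. The only thing to check is that the hypotheses of Theorem 1.2 are all verified from the assumptions of the corollary.

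First I would unpack the curvature condition (1.6): the chain of inequalities
\[
Ric_b(X,X)\ \geq\ R(X,JX,JX,X)\ \geq\ 0
\]
for every $X\in HM$ immediately gives that the pseudo-Hermitian Ricci tensor satisfies $Ric_b\geq 0$ on the horizontal bundle, which is the first structural hypothesis in Theorem 1.2. Second, applying Theorem 1.4(2) to the same curvature condition yields the volume growth bound
\[
V(R)\leq C R^{2n+2},\qquad \forall R>1,
\]
for some $C>0$; this is precisely the volume estimate required in Theorem 1.2 for all $R$ large enough (one may absorb the remaining small radii into the constant $C$ if needed). The hypotheses on $F$ — subcriticality with exponent $\sigma\in(1,1+\tfrac{2}{n})$, the existence of $\kappa\in(4-3n,\sigma]$ with $tF'(t)-\kappa F(t)\geq 0$, and (when $\kappa\leq 1$) the asymptotic lower bound in (1.5) — are assumed verbatim in the corollary's statement.

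With the three ingredients (horizontal Ricci nonnegativity, polynomial volume growth of order $2n+2$, and the structural conditions on $F$) verified, Theorem 1.2 applies directly and forces every positive solution of (1.4) to be constant. I do not expect any genuine obstacle: the substantive analytic work — the generalized Jerison–Lee identity, the cut-off integration, and the volume-based absorption argument — is entirely contained in the proof of Theorem 1.2, while the geometric replacement of the volume hypothesis by the pointwise curvature condition (1.6) is supplied by Theorem 1.4(2). The only minor bookkeeping is to note that Theorem 1.4(2) gives the volume bound for $R>1$, which is what ``large enough'' in Theorem 1.2 requires.
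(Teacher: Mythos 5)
Your proposal is correct and is exactly the route the paper takes: the corollary is a direct application of Theorem 1.2, with the needed hypotheses ($Ric_b\geq 0$ read off from (1.6), and the volume bound $V(R)\leq CR^{2n+2}$ supplied by Theorem 1.4(2)). No additional work is required, and your verification of the hypotheses matches the paper's implicit argument.
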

         \par
         Next, we consider positive solutions of the semilinear inequality 
         \begin{align}
             \Delta_b u+F(u)\leq 0,
         \end{align}   
         where $F$ is subcritical. In the Euclidean space $\mathbb{R}^n$, it is well known that there is no positive solution for the inequality
         \begin{align}
             \Delta u +u^\sigma\leq 0,
         \end{align}
         if $1<\sigma\leq \frac{n}{n-2}$ (see \cite{GS,NS}).
         Cheng and Yau \cite{CY} first studied the nonnegative solutions in terms of the volume of the geodesic ball. More precisely, they proved that any positive solution to $\Delta u\leq 0$ is constant if $VolB_r(x_0)\leq Cr^2$
         holds for all large enough $r$. Here $VolB_r(x_0)$ is the volume of the geodesic ball of radius $r$ centered at $x_0$. In \cite{GSun}, Grigor’yan and Sun proved a Liouville theorem for nonnegative weak solutions of (1.8) if the volume of geodesic ball satisfies 
         \begin{align*}
             VolB_r(x_0) \leq Cr^{ \frac{2\sigma}{\sigma-1}}\mathrm{ln}^{\frac{1}{\sigma-1}}r
         \end{align*}
         for all large enough $r$. Such Liouville results have also been generalized to sub-Riemannian settings. In \cite{BCC}, Birindelli, Dolcetta and Cutri obtained nonexistence results for positive solutions 
         \begin{align*}
             \Delta_b u+u^\sigma\leq 0, \quad in \ \mathbb{H}^n,
         \end{align*}
         if $1<\sigma\leq 1+\frac{1}{n}$. Recently, Wang and Zhang \cite{WZ} obtained Liouville type theorems for some semilinear differential inequalities to sub-Riemannian manifolds satisfying the generalized curvature-dimension inequality.
         \par
         In this paper, we want to extend the Grigor’yan and Sun's result to (1.7) on Sasakian manifolds.
         \begin{theorem}
             Let $(M^{ 2n+1}, HM, J,\theta)$ be a complete noncompact Sasakian manifold and assume that $F$ is a subcritical function with exponent $\sigma>1$ and 
             \begin{align*}
                 F(t)>0, \quad \forall t>0.
             \end{align*}
        If $V(R)\leq CR^{\frac{2\sigma}{\sigma-1}} \mathrm{ln}^{\frac{1}{\sigma-1}} R $ holds for large enough $R$, then (1.7) has no positive solution.
         \end{theorem}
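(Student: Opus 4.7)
The plan is to follow the test function method à la Grigor'yan--Sun \cite{GSun}, adapted to the sub-Riemannian setting. Suppose for contradiction that $u>0$ is a positive solution of (1.7) on $M$. For any nonnegative $\psi\in C_c^\infty(M)$, multiplying (1.7) by $\psi$ and integrating by parts twice yields
\[\int_M F(u)\,\psi\,dV \leq -\int_M u\,\Delta_b\psi\,dV \leq \int_M u\,|\Delta_b\psi|\,dV.\]
Applying H\"older's inequality on the right with exponents $\sigma$ and $\sigma'=\sigma/(\sigma-1)$ gives
\[\int_M u\,|\Delta_b\psi|\,dV \leq \Bigl(\int_M u^\sigma\psi\,dV\Bigr)^{1/\sigma}\Bigl(\int_M |\Delta_b\psi|^{\sigma'}\psi^{-\sigma'/\sigma}\,dV\Bigr)^{1/\sigma'}.\]
The subcritical assumption says $g(t):=F(t)/t^\sigma$ is non-increasing and strictly positive, hence $F(u)\geq g(M_R)\,u^\sigma$ on $\mathrm{supp}\,\psi\subset B_R$ with $M_R:=\sup_{B_R}u$. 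Combining and absorbing the $u^\sigma$ factor gives the working estimate
\[\int_M F(u)\,\psi\,dV \leq C\,g(M_R)^{-\sigma'}\int_M |\Delta_b\psi|^{\sigma'}\psi^{-\sigma'/\sigma}\,dV.\]

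Next I will take $\psi=\phi^{2\sigma'}$ where $\phi$ is a (smoothed) logarithmic cutoff equal to $1$ on $B_{R_0}$, to $\ln(R/d(x))/\ln(R/R_0)$ on the annulus $\{R_0<d(x)<R\}$, and to $0$ outside $B_R$; here $d$ is the Carnot--Carath\'eodory distance from a fixed basepoint. Using $|\nabla_b d|_H\leq 1$ a.e.\ together with a standard distance-function estimate on complete Sasakian manifolds, one obtains a pointwise bound of the form
\[|\Delta_b\psi|^{\sigma'}\psi^{-\sigma'/\sigma} \leq \frac{C}{d^{2\sigma'}\,(\ln(R/R_0))^{\sigma'}} \quad\text{on }\{R_0<d<R\}.\]
Integrating via the coarea formula and applying the assumed volume growth $V(R)\leq CR^{2\sigma'}(\ln R)^{1/(\sigma-1)}$, the gain $(\ln R)^{-\sigma'}$ from the cutoff precisely balances the $(\ln R)^{1/(\sigma-1)}$ correction inside the volume bound (up to a harmless $(\ln R)^{-1}$ from the remaining $r$-integration), yielding
\[\int_M |\Delta_b\psi|^{\sigma'}\psi^{-\sigma'/\sigma}\,dV \leq C \quad\text{uniformly in }R.\]
Substituting back and using $\psi=1$ on $B_{R_0}$ gives $\int_{B_{R_0}}F(u)\,dV\leq C\,g(M_R)^{-\sigma'}$; a truncation in $u$ (working on $\{u\leq N\}$ and letting $N\to\infty$) allows us to pass $R\to\infty$ and conclude $\int_M F(u)\,dV<\infty$.

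To close the argument, I will repeat the estimate with $\phi$ now supported in an annular shell $\{R_0\leq d\leq R\}$ (logarithmic profile at each end, $R_0\to\infty$); this shows $\int_{\{d\geq R_0\}}F(u)\,dV\to 0$ as $R_0\to\infty$, so combined with the finiteness above, $\int_M F(u)\,dV=0$. Since $F(t)>0$ for all $t>0$ and $u>0$ everywhere, this is the desired contradiction. The main obstacles will be (i) the pointwise control of $\Delta_b\phi$ for the logarithmic cutoff on a general complete Sasakian manifold without curvature hypotheses, which is delicate---the standard workaround is to perform only a single integration by parts and use Serrin's test function $\psi=\phi^q u^{-\alpha}$ for small $\alpha>0$, so that only $|\nabla_b\phi|_H$ appears---and (ii) the possibly unbounded $M_R$, handled by the truncation step, which works precisely because $\sigma>1$ and $F>0$.
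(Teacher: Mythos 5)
Your proposal does not follow the paper's route, and the sketch as written has two genuine gaps that the "obstacles" paragraph names but does not close. First, the factor $g(M_R)^{-\sigma'}$ (where $g(t)=F(t)/t^\sigma$ and $M_R=\sup_{B_R}u$) is not controlled as $R\to\infty$: for a subcritical $F$ such as $F(t)=t^\kappa$ with $1<\kappa<\sigma$, $g(t)=t^{\kappa-\sigma}\to 0$, so $g(M_R)^{-\sigma'}$ can blow up if $u$ is unbounded. The proposed fix --- truncating $u$ at level $N$ and letting $N\to\infty$ --- is not an actual argument: the integral $\int u\,|\Delta_b\psi|$ is taken over the full annular support of $\Delta_b\psi$ and cannot be restricted to $\{u\leq N\}$, and no uniform-in-$N$ bound is exhibited. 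The paper sidesteps this entirely: it works with the quantity $u^{-\sigma/(\sigma-1)}F(u)^{\sigma/(\sigma-1)}$ (which reduces to $u^\sigma$ only in the pure power case) and the pair of Serrin-type test functions $\psi=u^{-a-b\sigma}F^b\varphi^s$ and $\psi=(u^{-\sigma}F)^t\varphi^s$, so the subcritical inequality $\sigma F-tF'\geq 0$ is applied \emph{pointwise under the integral} in Lemma 4.1, and no $\sup_{B_R}u$ ever appears. Second, integrating by parts twice forces you to estimate $\Delta_b\psi$ for a logarithmic cutoff built on the Carnot--Carath\'eodory distance. On a general complete Sasakian (indeed, per Remark 1.8, general pseudo-Hermitian) manifold with no curvature hypothesis --- which is exactly the setting of Theorem 1.6 --- there is no Laplacian-comparison estimate to hand, so this step is not available. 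The paper avoids it by a single integration by parts combined with the dyadic-averaged cutoff $\varphi_i=i^{-1}\sum_{k=i+1}^{2i}h(r/2^k)$, whose horizontal gradient (not sub-Laplacian) has disjointly supported pieces, yielding the required $(\ln R)$ gain. You do identify the correct workaround --- single IBP with a Serrin test function $\varphi^q u^{-\alpha}$ --- but your proof as drafted doesn't use it; in the paper that workaround \emph{is} the proof, refined to handle general subcritical $F$ via the extra factor $(u^{-\sigma}F)^b$.
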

         \begin{theorem}
             Let $(M^{ 2n+1}, HM, J,\theta)$ be a complete noncompact Sasakian manifold and assume that $F$ is a subcritical function with exponent $\sigma>1$. If $V(R)\leq CR^2 $ holds for large enough $R$, then any positive solution of (1.7) is constant.
         \end{theorem}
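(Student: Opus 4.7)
The plan is to follow the Cheng--Yau scheme, adapted to the sub-Riemannian setting of a Sasakian manifold. Since $F\ge 0$ on $[0,\infty)$ by subcriticality, any positive solution $u$ of (1.7) satisfies $\Delta_b u\le -F(u)\le 0$, so $u$ is $\Delta_b$-superharmonic. Setting $v=\log u$ and computing directly gives the key pointwise inequality
\begin{equation*}
\Delta_b v \;=\; \frac{\Delta_b u}{u}-|\nabla_b v|^{2} \;\le\; -|\nabla_b v|^{2}.
\end{equation*}
No information about $F$ beyond $F\ge 0$ is used beyond this point; what remains is a classical propagation argument forcing $v$ to be constant.

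Multiplying by $\varphi^{2}$, with $\varphi$ a compactly supported nonnegative Lipschitz cutoff, integrating, and then redistributing one power of $\varphi$ via Cauchy--Schwarz with absorption, I would obtain the Caccioppoli-type estimate
\begin{equation*}
\int_{M} \varphi^{2}\,|\nabla_b v|^{2}\, dV \;\le\; 4\int_{M} |\nabla_b \varphi|^{2}\, dV.
\end{equation*}
The task thus reduces to producing cutoffs $\varphi_R\equiv 1$ on $B_R$ whose horizontal Dirichlet energy tends to $0$ as $R\to\infty$.

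This is where the volume hypothesis enters. Using the Carnot--Carath\'eodory distance $d$ to a fixed basepoint, I would take the logarithmic cutoff
\begin{equation*}
\varphi_R(x) \;=\; \begin{cases} 1, & d(x)\le R,\\[2pt] \dfrac{2\log R-\log d(x)}{\log R}, & R<d(x)<R^{2},\\[2pt] 0, & d(x)\ge R^{2}.\end{cases}
\end{equation*}
Since $d$ is horizontally $1$-Lipschitz (so $|\nabla_b d|\le 1$ a.e.), one has $|\nabla_b \varphi_R|\le 1/(d\log R)$ on the annulus $B_{R^{2}}\setminus B_R$. Decomposing this annulus into $O(\log R)$ dyadic shells and applying $V(R)\le CR^{2}$ on each shell yields
\begin{equation*}
\int_{M} |\nabla_b \varphi_R|^{2}\,dV \;\le\; \frac{1}{(\log R)^{2}}\sum_{k=0}^{\lceil\log_{2} R\rceil}\frac{V(B_{2^{k+1}R})}{(2^{k}R)^{2}} \;\le\; \frac{C'}{\log R}\;\longrightarrow\; 0.
\end{equation*}
Letting $R\to\infty$ in the Caccioppoli inequality forces $\nabla_b v\equiv 0$, hence $\nabla_b u\equiv 0$. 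Since the horizontal distribution of a Sasakian manifold is bracket-generating, a function annihilated by all horizontal vector fields is annihilated by their iterated brackets as well, hence by all tangent vectors, and is therefore locally (and globally, by connectedness) constant.

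The technical points to verify are the horizontal Lipschitz bound $|\nabla_b d|\le 1$ and the legitimacy of integration by parts against a merely Lipschitz test function built from $d$; both are standard in sub-Riemannian analysis via regularization of $d$. The main conceptual obstacle is that the quadratic growth $V(R)\le CR^{2}$ is precisely the borderline matching the logarithmic cutoff to produce the decay $1/\log R$: any strictly faster polynomial growth would break this scheme at exactly this step, which is why the stronger conclusion of constancy (rather than nonexistence, as in Theorem 1.6) is specifically tied to the slower volume growth here.
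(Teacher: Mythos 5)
Your proof is correct, and it differs from the paper's argument at two genuine points, both of which streamline the reasoning. First, you observe that subcriticality already gives $F\ge 0$, so $\Delta_b u \le -F(u)\le 0$ holds immediately; the paper instead begins by rerunning the entire Theorem~1.6 machinery (the test-function iteration of Lemma~4.1) to deduce $\int_M u^{-\sigma/(\sigma-1)}F(u)^{\sigma/(\sigma-1)}=0$, hence $F(u)\equiv 0$, and only then concludes $\Delta_b u\le 0$. That detour is logically superfluous: superharmonicity follows at once from $F\ge 0$, and the identity $F(u)\equiv 0$ drops out a posteriori once $u$ is shown constant. Second, for the Cheng--Yau step you use a logarithmic cutoff between radii $R$ and $R^2$, which under $V(R)\le CR^2$ gives $\int_M|\nabla_b\varphi_R|^2\lesssim (\log R)^{-1}\to 0$ and kills $\int\varphi_R^2|\nabla_b v|^2$ in a single pass. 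The paper instead uses a linear cutoff with $|\nabla_b\varphi|\lesssim R^{-1}$, obtaining only the uniform bound $\int_M|\nabla_b f|^2\le C$, and then needs a second pass (their (4.23)): re-estimating $s\int\varphi^{s-1}|\nabla_b f||\nabla_b\varphi|$ by Cauchy--Schwarz, exploiting that $\nabla_b\varphi$ is supported in the annulus $B_{2R}\setminus B_R$ and that the tail $\int_{M\setminus B_R}|\nabla_b f|^2$ vanishes since the total is finite. Both routes reach the same conclusion; yours is the more economical one-shot version, while the paper's is the classical two-pass idiom. The technical caveat you flag (regularizing the Lipschitz function $d$ and the a.e.\ bound $|\nabla_b d|\le 1$) is indeed standard and handled the same way in either version, so there is no gap.
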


         \begin{remark}
             Noting that since the communication relations are not required in the proof, Theorem 1.6 and Theorem 1.7 also hold for the general pseudo-Hermitian manifold.
         \end{remark}
         
         By Theorem 1.4, we have the following: 
         \begin{corollary}
            Let $(M^{ 2n+1,} HM, J,\theta)$ be a complete noncompact Sasakian manifold and assume that $F$ is a subcritical function with exponent $\sigma\in (1,1+\frac{1}{n}]$. If $F(t)>0 $ for any $ t>0 $ and one of the following holds:
            \\
            (1) $n=1$ and $M$ has nonnegative Tanaka-Webster scalar curvature;
            \\
            (2) $n\geq 2$ and $M$ has curvature condition (1.6),
            \\
             then (1.7) has no positive solution.
        \end{corollary}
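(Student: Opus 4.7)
The strategy is to deduce this corollary by combining the volume estimates of Theorem 1.4 with the Liouville-type nonexistence result of Theorem 1.6; the only thing to verify is that the polynomial volume bounds delivered by Theorem 1.4 are compatible with the volume hypothesis of Theorem 1.6 throughout the range $\sigma \in (1, 1 + \frac{1}{n}]$.

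First I invoke Theorem 1.4: in case (1), the nonnegative Tanaka-Webster scalar curvature assumption for $n=1$ yields $V(R) \leq CR^{4} = CR^{2n+2}$, and in case (2), the curvature condition (1.6) for $n\geq 2$ yields $V(R) \leq CR^{2n+2}$, both valid for sufficiently large $R$. Next I compare this against the growth condition in Theorem 1.6. The function $\sigma \mapsto \frac{2\sigma}{\sigma-1}$ is strictly decreasing on $(1,\infty)$, and at $\sigma = 1 + \frac{1}{n}$ takes the value $2(n+1) = 2n+2$. Therefore for $\sigma \in (1, 1+\frac{1}{n})$ we have $\frac{2\sigma}{\sigma-1} > 2n+2$, and hence $CR^{2n+2} \leq R^{\frac{2\sigma}{\sigma-1}} \ln^{\frac{1}{\sigma-1}} R$ holds trivially for large $R$.

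The only mildly delicate point is the endpoint $\sigma = 1 + \frac{1}{n}$, where the exponents match exactly: $\frac{2\sigma}{\sigma-1} = 2n+2$. Here the logarithmic correction $\ln^{\frac{1}{\sigma-1}} R = \ln^{n} R$ appearing in the hypothesis of Theorem 1.6 is precisely what is needed to absorb the constant $C$, giving $CR^{2n+2} \leq R^{2n+2}\ln^{n} R$ for large $R$. With the volume hypothesis of Theorem 1.6 thereby verified in both cases and across the full exponent range, the nonexistence of positive solutions to (1.7) follows immediately. The argument is essentially an arithmetic reconciliation of the two ingredients and presents no genuine obstacle.
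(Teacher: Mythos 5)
Your proposal is correct and takes the same route the paper implicitly intends: combine Theorem~1.4's volume bound $V(R)\leq CR^{2n+2}$ with Theorem~1.6's hypothesis $V(R)\leq CR^{\frac{2\sigma}{\sigma-1}}\ln^{\frac{1}{\sigma-1}}R$, noting that $\frac{2\sigma}{\sigma-1}\geq 2n+2$ for $\sigma\in(1,1+\tfrac1n]$. One small remark: at the endpoint $\sigma=1+\tfrac1n$ you don't actually need the logarithm to ``absorb the constant,'' since the hypothesis of Theorem~1.6 already allows an arbitrary multiplicative constant $C$; the bound $V(R)\leq CR^{2n+2}$ trivially implies $V(R)\leq CR^{2n+2}\ln^{n}R$ for $R\geq e$. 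This is purely a matter of exposition and does not affect the validity of your argument.
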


         $\mathbf{Notation}$ Throughout the paper, we adopt Einstein summation convention over repeated indices. Since the constant $C$ is not important, it may vary at different occurrences.
        
         \section{Preliminaries}
         \subsection{Pseudo-Hermitian geometry}
         In this section, we give a brief introduction to pseudo-Hermitian geometry (cf. \cite{DT,We,Ta} for details). A CR manifold is a smooth manifold $M$ endowed with a complex subbundle $H^{1,0}M $ of $TM\otimes \mathbb{C}$ satisfying \begin{equation}
         H^{1,0}M\cap H^{0,1}M=\{0\} ,\ \  [\Gamma(H^{1,0}M),\Gamma(H^{1,0}M)]\subseteq \Gamma(H^{1,0}M)  , 
         \end{equation}
         where $H^{0,1}M=\overline{H^{1,0}M}$. The bundle $H^{1,0}M $ is called a CR structure on the manifold $M$.  Equivalently, the CR structure can also be described by $(HM,J)$, where $HM=Re\{H^{1,0}M\oplus H^{0,1}M\}$ and $J$ is an almost complex structure on $HM$ such that
         \begin{align*}
             J(X+\overline{X})=\sqrt{-1}(X-\overline{X}),\quad \forall X\in H^{1,0}M.
         \end{align*}
         A CR manifold is said to be of hypersurface type if $dim_RM = 2n+1$ and $dim_CH^{1,0}M = n$. In this paper, we only consider CR manifolds $M$ which are hypersurface type and oriented.
         \par
         The orientation of $M$ implies that there exists a global nowhere vanishing 1-form $\theta$ such that $ker\theta=HM$. Any such section $\theta$ is called a pseudo-Hermitian structure on $M$. The Levi form $L_\theta$ of $\theta$ is defined by
          \begin{eqnarray}
            L_\theta(X,Y)=d\theta(X,JY) \nonumber
          \end{eqnarray}
         for any $X,Y\in HM$. $(M,HM,J)$ is said to be strictly pseudoconvex, if the Levi form $L_\theta$ is positive definite for some choice of $\theta$. Then the quadruple $(M,HM,J,\theta)$ is referred to as a pseudo-Hermitian manifold. In this case, it is easy to see that $\theta$ is a contact form and $G_\theta=d\theta(\cdot,J\cdot) $ defines a $J$-invariant Riemannian metric on $HM.$ Then the Reeb vector field $\xi$ is defined by 
         \begin{eqnarray}
           \theta(\xi)=1,\ d\theta(\xi,\cdot)=0,
         \end{eqnarray}
        and $TM=HM \oplus R\xi$. This induces a natural projection $\pi_b:TM\rightarrow HM$. In terms of $\theta$ and the decomposition $TM$, it is natural to define a Riemannian metric
        \begin{eqnarray}
         g_\theta=G_\theta+\theta\otimes \theta ,\nonumber
        \end{eqnarray}
        which is called the Webster metric. The volume form of $g_\theta$ is defined by $dV_\theta=\theta\wedge(d\theta)^{n}/n!$, and is often omitted when we integrate. By requiring $J\xi = 0$, one may extend the complex structure $J$ on $TM$.
        \par
        For a pseudo-Hermitian manifold $(M^{2n+1}, HM, J, \theta)$, a Lipschitz curve $\gamma:[0, l] \rightarrow  M$ is called horizontal if $\gamma^{'}\in H_{\gamma(t)}M$ a.e. in $[0, l]$. Due to the work of Chow \cite{Ch}, there always exist horizontal curves joining two points $p, q\in M. $ The Carnot-Carath$\acute{\mathrm{e}}$odory distance is defined as
        \begin{eqnarray}
              d(p,q)= inf\left\{ \int_0^l \sqrt{g_\theta(\gamma^{'},\gamma^{'})} dt \ | \  \gamma \in \Gamma(p,q) \right\},
            \nonumber
           \end{eqnarray}
        where $\Gamma(p,q) $ denotes the set of all horizontal curves joining $p$ and $q$. Clearly, Carnot-Carath$\acute{\mathrm{e}}$odory is indeed a distance function and induces the same topology on $(M,HM,J,\theta). $ One can refer to \cite{ABB} and \cite{St} and references therein for details.
        \par
        For a pseudo-Hermitian manifold $(M^{2n+1}, HM, J, \theta)$, there exists a canonical connection $\nabla$, called the Tanaka-Webster connection, preserving the horizontal bundle, the CR structure and the Webster metric. Moreover, its torsion $T_\nabla$ satisfies
        \begin{align*}
            T_\nabla(X,Y)=2d\theta(X,Y)\xi\ and\ T_\nabla(\xi,JX)+JT_\nabla(\xi,X)=0
        \end{align*}
        for any $X,Y\in HM.$ The pseudo-Hermitian torsion of $\nabla$, denoted by $\tau $, is defined by $ \tau(X)=T_\nabla(\xi,X)$ for any $X\in TM.$ Set $A(X,Y)=g_\theta(T_\nabla (\xi,X),Y )$ for any $X,Y\in TM$. It is well known that $M$ is Sasakian if $A=0. $
        \par
        Assume that $\{\eta_\alpha\}_{\alpha=1}^n$ is a local unitary frame field of $H^{1,0}M$ and let $\{ \theta^\alpha\}_{\alpha=1}^n$ be the dual basis. Then
        \begin{align*}
            d\theta=2\sqrt{-1} \theta^\alpha\wedge \theta^{\Bar{\alpha}}.
        \end{align*}
        For a smooth function $f$ on the Sasakian manifold $(M^{2n+1}, HM, J, \theta)$, its differential $df$ and covariant derivatives $ \nabla df$ can be expressed as
        \begin{align*}
            df=&f_0\theta+f_\alpha\theta^\alpha+f_{\bar{\alpha}}\theta^{\bar{\alpha}},
            \\
            \nabla d f=& f_{\alpha\beta}\theta^\alpha\otimes\theta^\beta +f_{\alpha\bar{\beta}}\theta^\alpha \otimes \theta^{\bar{\beta}}+ f_{\bar{\alpha}\beta}\theta^{\bar{\alpha}}\otimes \theta^\beta+f_{\bar{\alpha}\bar{\beta}}\theta^{\bar{\alpha}}\otimes \theta^{\bar{\beta}}
            \\       &+f_{0\alpha}\theta\otimes\theta^\alpha+f_{0\bar{\alpha}}\theta\otimes\theta^{\bar{\alpha}}+f_{\alpha0}\theta^\alpha\otimes\theta+f_{\bar{\alpha}0}\theta^{\bar{\alpha}}\otimes \theta ,\nonumber
        \end{align*}
        where $f_0=\xi(f),f_\alpha=\eta_\alpha(f),f_{\bar{\alpha}}=\eta_{\bar{\alpha}}(f)$ and so on. Then the horizontal gradient of $f$ is given by
        \begin{align*}
            \nabla_bf=f_\alpha \eta_{\bar{\alpha}}+f_{\bar{\alpha}}\eta_\alpha.
        \end{align*}
        Then we have the following communication relations (cf. \cite{DT})
        \begin{align*}
            &f_{\alpha\beta}-f_{\beta\alpha}=0,\quad  f_{\alpha\bar{\beta}}-f_{\bar{\beta}\alpha}=2\sqrt{-1}f_0\delta_\alpha^\beta,
            \\
            &f_{0\alpha}-f_{\alpha 0}=0,\quad f_{\alpha\beta0}-f_{\alpha0\beta}=f_{\alpha\bar{\beta}0}-f_{\alpha0\bar{\beta}}=0,
            \\
            &f_{\alpha\beta\bar{\gamma}}-f_{\alpha\bar{\gamma}\beta}=2\sqrt{-1}\delta_\gamma^\beta f_{\alpha0}+R_{\bar{\mu}\alpha\beta\bar{\gamma}}f_\mu,\quad f_{\alpha\bar{\beta}\bar{\gamma}}-f_{\alpha\bar{\gamma}\bar{\beta}}=0,
        \end{align*}
        where $R_{\bar{\mu}\alpha\beta\bar{\gamma}} $ are the components of the curvature of the Tanaka-Webster connection. Moreover, $R_{\alpha\bar{\beta}} = R_{\bar{\mu}\alpha\mu\bar{\beta}}$ are the components of the pseudo-Hermitian Ricci curvature $Ric_b$ and $R = R_{\alpha\bar{\alpha}}$ denotes the Tanaka-Webster scalar curvature. Finally, we define the horizontal energy density by
        \begin{align*}
            |\nabla_b f|^2=2|\partial f |^2=2f_\alpha f_{\bar{\alpha}}
        \end{align*}
         and the sub-Laplacian of $f$ by
        \begin{align*}
            \Delta_b f=f_{\alpha\bar{\alpha}}+f_{\bar{\alpha}\alpha}.
        \end{align*}

        \subsection{ Jerison-Lee type inequality}
        In this section, we want to establish a generalized inequality for the positive solution $u$ of (1.4). Assume that $F$ is a subcritical function with exponent $\sigma\in (1,1+\frac{2}{n})$. Let $e^f=u^{\frac{1}{n}}$, then we have
        \begin{align*}
            -\Delta_b f= 2n|\partial f|^2+2n\frac{F(u)}{u}=2n|\partial f|^2+2n\frac{F(e^{nf})}{e^{nf}}.
        \end{align*}
        Define $H(t)=\frac{F(e^{nt})}{e^{nt}}$, then
        \begin{align}
            -\Delta_b f= 2n|\partial f|^2+2nH(f).
        \end{align}
        Moreover, the fact that $F$ is a subcritical function with exponent $\sigma\in (1,1+\frac{2}{n})$ shows that 
        \begin{align}
            (p+2)H-H^{'}\geq 0,
        \end{align}
        where $-2<p=n(\sigma-1)-2<0$. As in \cite{JL}, we define the following tensors
        \begin{align}
            &D_{\alpha\beta}=f_{\alpha\beta}-2f_\alpha f_\beta, \quad D_\alpha=D_{\alpha\beta}f_{\bar{\beta}},\nonumber
            \\
            &E_{\alpha\bar{\beta}}=f_{\alpha\bar{\beta}}-\frac{1}{n}f_{\gamma\bar{\gamma}}\delta_\alpha^\beta,\quad E_\alpha=E_{\alpha\bar{\beta}}f_\beta,
            \\
            &G_\alpha=\sqrt{-1}f_{0\alpha}-\sqrt{-1}f_0f_\alpha+H(f)f_\alpha+|\partial f|^2f_\alpha.\nonumber
        \end{align}
        Introduce the function $g=|\partial f|^2+H(f)-\sqrt{-1}f_0$. Then the equation (2.3) can be written as 
        \begin{align}
            f_{\alpha\bar{\alpha}}=-ng.
        \end{align}
        Direct calculations show that
        \begin{align}
        &E_{\alpha\bar{\beta}}=f_{\alpha\bar{\beta}}+g\delta_\alpha^\beta,\quad E_\alpha=f_{\alpha\bar{\beta}}f_\beta+gf_\alpha,
            \nonumber \\
            &D_\alpha=f_{\alpha\bar{\beta}}f_\beta-2|\partial f|^2f_\alpha, \quad G_\alpha=\sqrt{-1}f_{0\alpha}+gf_\alpha.
        \end{align}
        Moreover, we have
        \begin{align}
            |\partial f |_{\bar{\alpha}}^2=& D_{\bar{\alpha}}+E_{\bar{\alpha}}+\bar{g}f_{\bar{\alpha}}-2f_{\bar{\alpha}}H(f),
            \nonumber \\
            g_{\bar{\alpha}}=& D_{\bar{\alpha}}+E_{\bar{\alpha}}+G_{\bar{\alpha}}+(H^{'}(f)-2H(f))f_{\bar{\alpha}},
            \\
            \bar{g}_\alpha=&D_\alpha+E_\alpha+G_\alpha+(H^{'}(f)-2H(f))f_{{\alpha}},\nonumber
            \\
            g_0=&\sqrt{-1}f_{\alpha}G_{\bar{\alpha}}-\sqrt{-1}f_{\bar{\alpha}}G_\alpha+2f_0|\partial f|^2+H^{'}(f)f_0-\sqrt{-1}f_{00}. \nonumber
        \end{align}
        We are now ready to establish the following inequality, which generalizes the equation (3.5) in \cite{MO} (see also \cite{CMRW}).
        \begin{lemma}
            Let $(M^{ 2n+1}, HM, J,\theta)$ be a complete noncompact Sasakian manifold with $Ric_b\geq 0 $. Set 
            \begin{align}
                \mathcal{M}=&Re\ \nabla_{\eta_{\bar{\alpha}}}\{e^{2(n-1)f}[(g+3\sqrt{-1}f_0)E_\alpha+(g-\sqrt{-1}f_0)D_\alpha-3\sqrt{-1}f_0G_\alpha \nonumber
                \\
                &-\frac{p}{4}f_\alpha |\partial f|^4] \},
            \end{align}
            then
            \begin{align}
                \mathcal{M}\geq &(|D_{\alpha\beta}|^2+|E_{\alpha\bar{\beta}}|^2)e^{2(n-1)f}H(f)+(|G_\alpha|^2+|D_{\alpha\beta}f_{\bar{\gamma}}+E_{\alpha\bar{\gamma}}f_\beta |^2)e^{2(n-1)f} \nonumber
                \\
                &+s_0(|D_\alpha+G_\alpha|^2+|E_\alpha-G_\alpha|^2)e^{2(n-1)f} \nonumber
                \\
                &+e^{2(n-1)f}|\sqrt{1-s_0}(D_\alpha+G_\alpha)+\frac{1}{2\sqrt{1-s_0}}f_\alpha(H^{'}(f)-2H(f)-\frac{p}{2}|\partial f|^2)|^2 \nonumber
                \\
                &+e^{2(n-1)f}|\sqrt{1-s_0}(E_\alpha-G_\alpha)+\frac{1}{2\sqrt{1-s_0}}f_\alpha(H^{'}(f)-2H(f)-\frac{p}{2}|\partial f|^2)|^2 \nonumber
                \\
                &-(H^{'}(f)-2H(f))e^{2(n-1)f}|\partial f|^2 \left ((2n-1)H(f)+\frac{H^{'}(f)-2H(f)}{2(1-s_0)}\right) \nonumber
                \\
                &-\left[ (H^{'}(f)-2H(f))(2n-1-\frac{p}{2(1-s_0)})-\frac{p}{4}(n+2)H(f)\right]e^{2(n-1)f}|\partial f|^4 \nonumber
                \\
                &-\frac{p}{4}(n+\frac{p}{2(1-s_0)})|\partial f|^6-3n (H^{'}(f)-2H(f))e^{2(n-1)f}f_0^2,
            \end{align}
            where $0<s_0<1$ is a constant.
        \end{lemma}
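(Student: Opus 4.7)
The plan is to treat this as a direct (but heavy) computation in the spirit of Jerison--Lee, Ma--Ou and Cao--Ma--Ren--Wang: expand the divergence $\mathcal{M}$, repackage all second-order terms in $f$ into the tensors $D_{\alpha\beta}, E_{\alpha\bar\beta}, G_\alpha$ using the identities (2.5)--(2.7), collect pure squares, and then absorb the residual indefinite terms by completing the square with a parameter $s_0\in(0,1)$. The curvature hypothesis $\mathrm{Ric}_b\geq 0$ and the subcritical inequality $(p+2)H-H'\geq 0$ enter only at the very end to discard boundary terms of a definite sign.

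First I would write $\mathcal{M}=\mathrm{Re}\,\eta_{\bar\alpha}\{e^{2(n-1)f}\,V_\alpha\}$ with $V_\alpha$ the bracketed vector, and use the Leibniz rule to split $\mathcal{M}$ into a derivative falling on $V_\alpha$ and a term $2(n-1)\,\mathrm{Re}\,(e^{2(n-1)f}f_{\bar\alpha}V_\alpha)$. For the derivative of $V_\alpha$, I would differentiate each of the five summands separately: (a) $(g+3\sqrt{-1}f_0)E_\alpha$ produces $g_{\bar\alpha}E_\alpha+(g+3\sqrt{-1}f_0)E_{\alpha,\bar\alpha}$ and similarly for the $D$ piece; (b) $-3\sqrt{-1}f_0 G_\alpha$ contributes $-3\sqrt{-1}f_{0\bar\alpha}G_\alpha-3\sqrt{-1}f_0 G_{\alpha,\bar\alpha}$; (c) the $-\frac{p}{4}f_\alpha|\partial f|^4$ piece is purely ``lower order'' and its differentiation is handled by (2.7). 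For the divergences $E_{\alpha,\bar\alpha}$, $D_{\alpha,\bar\alpha}$, $G_{\alpha,\bar\alpha}$ I would use the commutation formulas from Section~2.1 to exchange the order of covariant derivatives; this is where the Tanaka--Webster Ricci curvature $R_{\alpha\bar\beta}$ and the torsion terms $f_{\alpha 0}$ appear, and the Sasakian hypothesis $A=0$ kills the torsion part.

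After expansion every term is either (i) a manifestly nonnegative pointwise square such as $|D_{\alpha\beta}|^2 e^{2(n-1)f}H(f)$, $|E_{\alpha\bar\beta}|^2 e^{2(n-1)f}H(f)$, $|G_\alpha|^2 e^{2(n-1)f}$ and the mixed square $|D_{\alpha\beta}f_{\bar\gamma}+E_{\alpha\bar\gamma}f_\beta|^2 e^{2(n-1)f}$, arising from the diagonal cross products when substituting (2.7); (ii) a Ricci-curvature contribution $R_{\alpha\bar\beta}f_\alpha f_{\bar\beta}\,e^{2(n-1)f}$ which is $\geq 0$ by $\mathrm{Ric}_b\geq 0$ and is therefore dropped; (iii) products of the form $f_\alpha(H'(f)-2H(f)-\tfrac{p}{2}|\partial f|^2)$ times $(D_\alpha+G_\alpha)$ or $(E_\alpha-G_\alpha)$. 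The group (iii) is the indefinite piece: I would split $|D_\alpha+G_\alpha|^2=s_0|D_\alpha+G_\alpha|^2+(1-s_0)|D_\alpha+G_\alpha|^2$ (and the same for $E_\alpha-G_\alpha$) and use the $(1-s_0)$ portion to complete the square against the indefinite products, producing precisely the two bracketed squares in the statement. What is left after the completion of the square is a quadratic form in $|\partial f|^2$, $H(f)$, $H'(f)-2H(f)$ and $f_0^2$ with no derivatives of $f$ of order $\geq 2$, and matches exactly the last four lines of the claimed inequality.

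The main obstacle is purely organizational: there are roughly a dozen second-order terms after expansion, and matching them against (2.7) so that the ``perfect squares'' listed in the statement actually appear requires careful bookkeeping of the coefficients coming from the $2(n-1)f_{\bar\alpha}V_\alpha$ boundary term, which is what forces the specific weight $e^{2(n-1)f}$ and the coefficient $\tfrac{p}{4}$ in front of $f_\alpha|\partial f|^4$. A secondary, but delicate, point is the $f_0^2$ term: after using $g_0$ from (2.7) one must verify that the coefficient $-3n$ in front of $(H'-2H)f_0^2$ is reproduced, which pins down the constant $3$ in $-3\sqrt{-1}f_0 G_\alpha$ inside $V_\alpha$. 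Once these coefficients check out, the subcritical condition $(p+2)H-H'\geq 0$ is only needed implicitly for the later application of the lemma, not for the lemma itself, so the proof concludes by just exhibiting the algebraic identity modulo the nonnegative Ricci term.
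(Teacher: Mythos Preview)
Your plan is correct and follows essentially the same route as the paper: split the divergence into the four summands of $V_\alpha$, compute each $\eta_{\bar\alpha}$-divergence using the commutation relations and (2.5)--(2.8) (the Ricci term enters through $D_{\alpha,\bar\alpha}$ and is dropped by $\mathrm{Ric}_b\geq 0$), take the real part to obtain a long expression in $D_{\alpha\beta},E_{\alpha\bar\beta},G_\alpha$ and the scalars $|\partial f|^2,H,H',f_0$, and then complete the square with the parameter $s_0$ exactly as you describe. Your final remark that the subcritical inequality $(p+2)H-H'\geq 0$ is not used in the lemma itself (only later, in Section~3) is also right; the only inequality invoked in the lemma is $\mathrm{Ric}_b\geq 0$ to discard the curvature contribution $(|\partial f|^2+H(f))e^{2(n-1)f}R_{\alpha\bar\beta}f_{\bar\alpha}f_\beta$.
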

        \begin{proof}
            Let $L=L_1+L_2+L_3+L_4$, where
            \begin{align*}
                &L_1=\nabla_{\eta_{\bar{\alpha}}}\{(g+3\sqrt{-1}f_0)E_\alpha e^{2(n-1)f}\},\quad L_2=\nabla_{\eta_{\bar{\alpha}}}\{(g-\sqrt{-1}f_0)D_\alpha e^{2(n-1)f}\},
                \\
                &L_3=\nabla_{\eta_{\bar{\alpha}}}\{-3\sqrt{-1}f_0G_\alpha e^{2(n-1)f}\},\quad L_4=\nabla_{\eta_{\bar{\alpha}}}\{-\frac{p}{4}f_\alpha |\partial f|^4 e^{2(n-1)f}\}.
            \end{align*}
            In the computation of $L_1,L_3$, the communication relations are the same as in the Heisenberg group $\mathbb{H}^n$. Hence, as done in \cite{MO}, we have
            \begin{align}
                e^{-2(n-1)f}L_1=&(g+3\sqrt{-1}f_0)|E_{\alpha\bar{\beta}}|^2+(g_{\bar{\alpha}}-3G_{\bar{\alpha}})E_\alpha \nonumber
                \\
                &+(3\bar{g}+2(n-1)(g+3\sqrt{-1}f_0))f_{\bar{\alpha}}E_\alpha \nonumber
                \\
                &+(1-n)(g+3\sqrt{-1}f_0)f_\alpha g_{\bar{\bar{\alpha}}},
                \\
                e^{-2(n-1)f}L_3=&3|G_\alpha|^2-3(\bar{g}+2(n-1)\sqrt{-1}f_0)f_{\bar{\alpha}}G_\alpha-3\sqrt{-1}f_0f_\alpha g_{\bar{\alpha}} \nonumber
                \\
                &-3nf_0g_0+3n\sqrt{-1}f_0|g|^2+6nf_0^2g.
            \end{align}
            Next we compute $L_2$. By the communication relations, we obtain
            \begin{align*}
                f_{\alpha\beta\bar{\alpha}}=&f_{\alpha\bar{\alpha}\beta}+2\sqrt{-1}\delta_\alpha^\beta f_{0\alpha}+R_{\beta\bar{\mu}}f_\mu 
                \\
                =&(f_{\bar{\alpha}\alpha}+2n\sqrt{-1}f_0)_\beta+2\sqrt{-1}f_{0\beta}+R_{\beta\bar{\mu}}f_\mu 
                \\
                =&-n\bar{g}_\beta+2(n+1)(G_\beta-gf_\beta) +R_{\beta\bar{\mu}}f_\mu
                \\
                =&2(n+1)G_\beta-n\bar{g}_\beta-2(n+1)f_\beta g  +R_{\beta\bar{\mu}}f_\mu.
            \end{align*}
           Combining with (2.5) and (2.6), we have
           \begin{align}
               D_{\alpha\bar{\alpha}}=&f_{\alpha\beta\bar{\alpha}}f_{\bar{\beta}}+f_{\alpha\beta}f_{\bar{\beta}\bar{\alpha}}-2|\partial f|^2_{\bar{\alpha}}f_\alpha-2|\partial f|^2f_{\alpha\bar{\alpha}} \nonumber
               \\
               =& ( 2(n+1)G_\beta-n\bar{g}_\beta-2(n+1)f_\beta g +R_{\beta\bar{\mu}}f_\mu)f_{\bar{\beta}}\nonumber
               \\
               &+(D_{\alpha\beta}+2f_\alpha f_\beta )(D_{\bar{\alpha}\bar{\beta}}+2f_{\bar{\alpha}}f_{\bar{\beta}})\nonumber
               \\
               &-2(D_{\bar{\alpha}}+E_{\bar{\alpha}}+\bar{g}f_{\bar{\alpha}}-2f_{\bar{\alpha}}H(f) )f_\alpha+2n|\partial f|^2g \nonumber
               \\
               =&|D_{\alpha\beta}|^2+2f_{\bar{\alpha}}D_\alpha-2f_\alpha E_{\bar{\alpha}}+2(n+1)f_{\bar{\alpha}}G_\alpha-nf_{\bar{\alpha}}\bar{g}_\alpha+R_{\beta\bar{\mu}}f_\mu f_{\bar{\beta}}.
           \end{align}
           It follows that 
           \begin{align}
               &e^{-2(n-1)f}L_2=e^{-2(n-1)f}\nabla_{\eta_{\bar{\alpha}}}\{(g-\sqrt{-1}f_0)D_\alpha e^{2(n-1)f}\} \nonumber
               \\
               =&(g-\sqrt{-1}f_0)D_{\alpha\bar{\alpha}}+(g_{\bar{\alpha}}-\sqrt{-1}f_{0\bar{\alpha}})D_\alpha+2(n-1)(g-\sqrt{-1}f_0)f_{\bar{\alpha}}D_\alpha \nonumber
               \\
               =&(g-\sqrt{-1}f_0)(|D_{\alpha\beta}|^2+2f_{\bar{\alpha}}D_\alpha-2f_\alpha E_{\bar{\alpha}}+2(n+1)f_{\bar{\alpha}}G_\alpha-nf_{\bar{\alpha}}\bar{g}_\alpha+R_{\beta\bar{\mu}}f_\mu f_{\bar{\beta}} ) \nonumber
               \\
               &+(g_{\bar{\alpha}}+G_{\bar{\alpha}}-\bar{g}f_{\bar{\alpha}})D_\alpha+2(n-1)(g-\sqrt{-1}f_0)f_{\bar{\alpha}}D_\alpha \nonumber
            \\
             =&(g-\sqrt{-1}f_0)|D_{\alpha\beta}|^2+(g_{\bar{\alpha}}+G_{\bar{\alpha}})D_\alpha+(2ng-\bar{g}-2n\sqrt{-1}f_0)f_{\bar{\alpha}}D_\alpha \nonumber
             \\
             &+(g-\sqrt{-1}f_0)(-2f_\alpha E_{\bar{\alpha}}+2(n+1)f_{\bar{\alpha}}G_\alpha-nf_{\bar{\alpha}}\bar{g}_\alpha+R_{\beta\bar{\mu}}f_\mu f_{\bar{\beta}} ).
           \end{align}
           As for $L_4$, by (2.6) and (2.8), we obtain
           \begin{align}
               e^{-2(n-1)f}L_4=&e^{-2(n-1)f}\nabla_{\eta_{\bar{\alpha}}}\{-\frac{p}{4}f_\alpha |\partial f|^4 e^{2(n-1)f}\} \nonumber
               \\
               =&-\frac{p}{2}|\partial f|^2|\partial f|^2_{\bar{\alpha}}f_\alpha-\frac{p}{4}f_{\alpha\bar{\alpha}}|\partial f|^4-\frac{p}{2}(n-1)|\partial f|^6 \nonumber
               \\
               =&-\frac{p}{2}(D_{\bar{\alpha}}+E_{\bar{\alpha}})|\partial f|^2f_\alpha-\frac{p}{4}n|\partial f|^6
               \\
               &+\frac{p}{4}(n+2)|\partial f|^4H(f)-\frac{p}{4}(n+2)\sqrt{-1}f_0|\partial f|^4. \nonumber
           \end{align}
           From (2.11)-(2.15) and $ f_\alpha E_{\bar{\alpha}}=f_{\bar{\alpha}}E_\alpha$, we have
           \begin{align}
               &e^{-2(n-1)f}L
               \nonumber\\
               =&(g-\sqrt{-1}f_0)|D_{\alpha\beta}|^2+(g+3\sqrt{-1}f_0)|E_{\alpha\bar{\beta}}|^2+3|G_\alpha|^2+(g_{\bar{\alpha}}+G_{\bar{\alpha}})D_\alpha \nonumber
               \\
               &+(g_{\bar{\alpha}}-3G_{\bar{\alpha}})E_\alpha +(2ng-\bar{g}-2n\sqrt{-1}f_0)f_{\bar{\alpha}}D_\alpha-\frac{p}{2}|\partial f|^2f_\alpha D_{\bar{\alpha}} \nonumber
               \\
               &+(2(n-2)g+3\bar{g}-\frac{p}{2}|\partial f|^2+(6n-4)\sqrt{-1}f_0)f_\alpha E_{\bar{\alpha}}\nonumber
               \\
               &+(2(n+1)g-3\bar{g}-(8n-4)\sqrt{-1}f_0)f_{\bar{\alpha}}G_\alpha \nonumber
               \\
               &-((n-1)g+3n\sqrt{-1}f_0)f_\alpha g_{\bar{\alpha}}-n(g-\sqrt{-1}f_0)f_{\bar{\alpha}}\bar{g}_\alpha-3nf_0g_0 \nonumber
               \\
               &+3n\sqrt{-1}f_0|g|^2+6n|f_0|^2g-\frac{p}{4}n|\partial f|^6+\frac{p}{4}(n+2)|\partial f|^4H(f) \nonumber
               \\
               &-\frac{p}{4}(n+2)\sqrt{-1}f_0|\partial f|^4+(g-\sqrt{-1}f_0)R_{\alpha\bar{\beta}}f_{\bar{\alpha}}f_\beta.
           \end{align}        
        Substituting (2.8) into (2.16), we conclude that 
        \begin{align}
            &e^{-2(n-1)f}L
               \nonumber\\
               =&(g-\sqrt{-1}f_0)|D_{\alpha\beta}|^2+(g+3\sqrt{-1}f_0)|E_{\alpha\bar{\beta}}|^2+3|G_\alpha|^2+(D_{\bar{\alpha}}+E_{\bar{\alpha}}+2G_{\bar{\alpha}})D_\alpha \nonumber
               \\
               &+(D_{\bar{\alpha}}+E_{\bar{\alpha}}-2G_{\bar{\alpha}})E_\alpha-(2n-1)|\partial f |^2[H^{'}(f)-2H(f)]H(f)
               \nonumber\\
               &-[(2n-1)(H^{'}(f)-2H(f) )-\frac{p}{4}(n+2)H(f)]|\partial f|^4-\frac{p}{4}n|\partial f|^6\nonumber
               \\
               &-3nf_0^2(H^{'}(f)-2H(f) )+(g-\sqrt{-1}f_0)R_{\alpha\bar{\beta}}f_{\bar{\alpha}\beta}+\mathcal{Q} \nonumber
               \\
               &-\frac{p}{4}(n+2)\sqrt{-1}f_0|\partial f|^4-\sqrt{-1}f_0(H^{'}(f)-2H(f) )|\partial f|^2+3n\sqrt{-1}f_0|g|^2\nonumber
               \\
               &-6n\sqrt{-1}f_0^3+3n\sqrt{-1}f_0f_{00},
        \end{align}
        where 
        \begin{align*}
            \mathcal{Q}=&[(n-1)g-(n+2)\sqrt{-1}f_0+H^{'}(f)-2H(f)]f_{\bar{\alpha}}D_\alpha
            \\
            &-[(n-1)g+3n\sqrt{-1}f_0+\frac{p}{2}|\partial f|^2]f_\alpha D_{\bar{\alpha}}
            \\
            &+[(4n+2)\sqrt{-1}f_0-\frac{p}{2}|\partial f|^2+H^{'}(f)-2H(f)]f_{\bar{\alpha}}E_\alpha
            \\
            &+[(n-1)g-(4n+2)\sqrt{-1}f_0]f_{\bar{\alpha}}G_\alpha+[(1-n)g-6n\sqrt{-1}f_0]f_\alpha G_{\bar{\alpha}}
            \\
            =&(f_{\bar{\alpha}}D_\alpha+f_{\bar{\alpha}}E_\alpha)(H^{'}(f)-2H(f)-\frac{p}{2}|\partial f|^2 )
            \\
            &+(f_{\bar{\alpha}}D_\alpha-f_\alpha D_{\bar{\alpha}})(( n-1+\frac{p}{2})|\partial f|^2+(n-1)H(f))
            \\
            &+(2n+1)\sqrt{-1}f_0(f_{\bar{\alpha}}D_\alpha+f_\alpha D_{\bar{\alpha}})+(4n+2)\sqrt{-1}f_0f_{\bar{\alpha}}E_\alpha
            \\
            &+(n-1)(f_{\bar{\alpha}}G_\alpha-f_\alpha G_{\bar{\alpha}} )(|\partial f|^2+H(f))-5(n+1)\sqrt{-1}f_0(f_{\bar{\alpha}}G_\alpha+f_\alpha G_{\bar{\alpha}}) .
        \end{align*}
        Considering the real part of $L$ and using the curvature condition, we have
        \begin{align*}
            \mathcal{M}=&(|D_{\alpha\beta}|^2+|E_{\alpha\bar{\beta}}|^2)H(f)e^{2(n-1)f}
            \\
            &+(|G_\alpha|^2+|D_\alpha+G_\alpha|^2+|E_\alpha-G_\alpha|^2+|D_{\alpha\beta}f_{\bar{\gamma}}+E_{\alpha\bar{\gamma}}f_\beta |^2)e^{2(n-1)f}
            \\
            &+(H^{'}(f)-2H(f)-\frac{p}{2}|\partial f|^2 )Re(f_{\bar{\alpha}}D_\alpha+f_{\bar{\alpha}}E_\alpha)e^{2(n-1)f}
            \\
            &-(2n-1)[H^{'}(f)-2H(f)]H(f)|\partial f |^2e^{2(n-1)f}-\frac{p}{4}n|\partial f|^6e^{2(n-1)f}
            \\
            &-[(2n-1)(H^{'}(f)-2H(f) )-\frac{p}{4}(n+2)H(f)]|\partial f|^4e^{2(n-1)f}
            \\
            &-3nf_0^2(H^{'}(f)-2H(f) )e^{2(n-1)f}+(|\partial f |^2+H(f))e^{2(n-1)f}R_{\alpha\bar{\beta}}f_{\bar{\alpha}}f_\beta 
            \\
            \geq & (|D_{\alpha\beta}|^2+|E_{\alpha\bar{\beta}}|^2)H(f)e^{2(n-1)f}+(|G_\alpha|^2+|D_{\alpha\beta}f_{\bar{\gamma}}+E_{\alpha\bar{\gamma}}f_\beta |^2)e^{2(n-1)f}
            \\
            &+(s_0+1-s_0)(|D_\alpha+G_\alpha|^2+|E_\alpha-G_\alpha|^2)e^{2(n-1)f}
            \\
            &+(H^{'}(f)-2H(f)-\frac{p}{2}|\partial f|^2 )Re(f_{\bar{\alpha}}(D_\alpha+G_\alpha))e^{2(n-1)f}
            \\
            &+(H^{'}(f)-2H(f)-\frac{p}{2}|\partial f|^2 )Re(f_{\bar{\alpha}}(E_\alpha-G_\alpha))e^{2(n-1)f}
            \\
            &-(2n-1)[H^{'}(f)-2H(f)]H(f)|\partial f |^2e^{2(n-1)f}-\frac{p}{4}n|\partial f|^6e^{2(n-1)f}
            \\
            &-[(2n-1)(H^{'}(f)-2H(f) )-\frac{p}{4}(n+2)H(f)]|\partial f|^4e^{2(n-1)f}
            \\
            &-3nf_0^2(H^{'}(f)-2H(f) )e^{2(n-1)f}.
        \end{align*}
        Finally, we rewrite the right hand side of the above inequality and obtain
        \begin{align*}
            \mathcal{M}\geq &(|D_{\alpha\beta}|^2+|E_{\alpha\bar{\beta}}|^2)H(f)e^{2(n-1)f}+(|G_\alpha|^2+|D_{\alpha\beta}f_{\bar{\gamma}}+E_{\alpha\bar{\gamma}}f_\beta |^2)e^{2(n-1)f}
            \\
            &+s_0(|D_\alpha+G_\alpha|^2+|E_\alpha-G_\alpha|^2)e^{2(n-1)f}
            \\
            &+e^{2(n-1)f}|\sqrt{1-s_0}(D_\alpha+G_\alpha)+\frac{1}{2\sqrt{1-s_0}}f_\alpha(H^{'}(f)-2H(f)-\frac{p}{2}|\partial f|^2)|^2 \nonumber
                \\
                &+e^{2(n-1)f}|\sqrt{1-s_0}(E_\alpha-G_\alpha)+\frac{1}{2\sqrt{1-s_0}}f_\alpha(H^{'}(f)-2H(f)-\frac{p}{2}|\partial f|^2)|^2
                \\
            &-\frac{1}{2(1-s_0)}e^{2(n-1)f}|\partial f|^2(H^{'}(f)-2H(f)-\frac{p}{2}|\partial f|^2)^2
            \\
            &-(2n-1)[H^{'}(f)-2H(f)]H(f)|\partial f |^2e^{2(n-1)f}-\frac{p}{4}n|\partial f|^6e^{2(n-1)f}
            \\
            &-[(2n-1)(H^{'}(f)-2H(f) )-\frac{p}{4}(n+2)H(f)]|\partial f|^4e^{2(n-1)f}
            \\
            &-3nf_0^2(H^{'}(f)-2H(f) )e^{2(n-1)f}.
        \end{align*}
        Inequality (2.10) follows after some manipulation on the last four terms in the above.
        \end{proof}

        \section{Proof of Theorem 1.1 and Theorem 1.2} 
        Let $(M^{ 2n+1}, HM, J,\theta)$ be a complete noncompact Sasakian manifold. As in Sect 2.2, we assume that $u$ is a positive solution of (1.4) and $e^f=u^{\frac{1}{n}}$. Moreover, given $R>0$, there exists a cut-off function $\varphi$ such that 
        $$  \left\{
        \begin{array}{rcl}
         &\varphi=1    &  in\ B_R, \\
         &0\leq \varphi\leq 1,    & in\ B_{2R} ,\\
         &\varphi=0,   &  in\ M \backslash B_{2R},  \\
          & |\partial \varphi|\leq \frac{C}{R},  & in\ M. 
        \end{array}
        \right.
        $$
        Before the proof, we want to estimate $\mathcal{M}$ in suitable nonnegative terms such that all the coefficients are positive. In fact, since $-2<p<0$ and $(p+2)H-H^{'}\geq 0,$ 
        we have
        \begin{align*}
            &(H^{'}(f)-2H(f))\left(2n-1-\frac{p}{2(1-s_0)}\right)-\frac{p}{4}(n+2)H(f)
            \\
            \leq &p\left(2n-1-\frac{p}{2(1-s_0)}\right)H(f)-\frac{p}{4}(n+2)H(f)
            \\
            = &p\left(\frac{7n-6}{4}-\frac{p}{2(1-s_0)}\right)H(f)\leq 0.
        \end{align*}
        Hence, we only need to estimate 
        \begin{align*}
            -(H^{'}(f)-2H(f))e^{2(n-1)f}|\partial f|^2 \left ((2n-1)H(f)+\frac{H^{'}(f)-2H(f)}{2(1-s_0)}\right)
        \end{align*}
        and
        \begin{align*}
            -\frac{p}{4}\left(n+\frac{p}{2(1-s_0)}\right)|\partial f|^6.
        \end{align*}
        More precisely, we need to find a suitable $s_0$ such that 
        \begin{align*}
           (2n-1)H(f)+\frac{H^{'}(f)-2H(f)}{2(1-s_0)}\geq 0,\quad and \quad n+\frac{p}{2(1-s_0)}>0.
        \end{align*}
       Equivalently, 
       \begin{align}
           H^{'}(f)\geq 2(2-2n+(2n-1)s_0)H(f), \quad 0<s_0<1+\frac{p}{2n}.
       \end{align}
       Note that $H(t)=\frac{F(e^{nt})}{e^{nt}}$. Therefore, under the condition in Theorem 1.1 or 1.2, there exists a constant $\kappa\in (\frac{4}{n}-3,1+\frac{2}{n})$ such that 
       \begin{align*}
           tF^{'}(t)-\kappa F(t)\geq 0,\quad \forall t>0.
       \end{align*}
       Then we have
       \begin{align*}
           H^{'}(f)\geq(n\kappa-n)H(f)\geq 2(2-2n)H(f).
       \end{align*}
       In this case, we can find a small $0<s_0<min\{1+\frac{p}{2n}, \frac{n\kappa-4+3n}{4n-2} \}$ such that (3.1) holds. In particular, from $(p+2)H-H^{'}\geq 0$ and (2.10), we have
       \begin{align}
                \mathcal{M}\geq &(|D_{\alpha\beta}|^2+|E_{\alpha\bar{\beta}}|^2)e^{2(n-1)f}H(f)+(|G_\alpha|^2+|D_{\alpha\beta}f_{\bar{\gamma}}+E_{\alpha\bar{\gamma}}f_\beta |^2)e^{2(n-1)f} \nonumber
                \\
                &+s_0(|D_\alpha+G_\alpha|^2+|E_\alpha-G_\alpha|^2)e^{2(n-1)f} \nonumber
                \\
                &+e^{2(n-1)f}|\sqrt{1-s_0}(D_\alpha+G_\alpha)+\frac{1}{2\sqrt{1-s_0}}f_\alpha(H^{'}(f)-2H(f)-\frac{p}{2}|\partial f|^2)|^2 \nonumber
                \\
                &+e^{2(n-1)f}|\sqrt{1-s_0}(E_\alpha-G_\alpha)+\frac{1}{2\sqrt{1-s_0}}f_\alpha(H^{'}(f)-2H(f)-\frac{p}{2}|\partial f|^2)|^2 \nonumber
                \\
                &-p \left (2n-1+\frac{n\kappa-n-2}{2(1-s_0)}\right)e^{2(n-1)f}H^2(f)|\partial f|^2 \nonumber
                \\
                &-\left[ p(2n-1-\frac{p}{2(1-s_0)})-\frac{p}{4}(n+2)\right]e^{2(n-1)f}H(f)|\partial f|^4 \nonumber
                \\
                &-\frac{p}{4}(n+\frac{p}{2(1-s_0)})|\partial f|^6-3n pe^{2(n-1)f}H(f)f_0^2,
            \end{align}
            where $0<s_0<min\{1+\frac{p}{2n}, \frac{n\kappa-4+3n}{4n-2} \}$. Note that all coefficients are positive in (3.2). In the sequel, we assume that $0<s_0<min\{1+\frac{p}{2n}, \frac{n\kappa-4+3n}{4n-2} \}$. Next, we establish two integral estimates, which will be useful in the proof.
            \begin{lemma}
            Let $s>0$ large enough, we have
                \begin{align}
                    \int_M \varphi^{s-2}f_0^2e^{2(n-1)f}\leq &\epsilon R^2\int_M \varphi^s \mathcal{M}+C\int_M\varphi^{s-2}|\partial f|^4e^{2(n-1)f} \nonumber
                    \\
                    &+C\int_M\varphi^{s-2}|\partial f|^2e^{2(n-1)f}H(f)+\frac{C}{R^2}\int_M\varphi^{s-4}|\partial f|^2e^{2(n-1)f},
                    \\
                    \int_M \varphi^se^{2(n-1)f}H^3(f)\leq &C \int_M \varphi^{s}|\partial f|^2e^{2(n-1)f}H^2(f)+\frac{C}{R^2}\int_M \varphi^{s-2}e^{2(n-1)f}H^2(f),
                \end{align}
                where $\epsilon$ is a small positive constant.
            \end{lemma}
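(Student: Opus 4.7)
The plan is to prove (3.3) and (3.4) by two independent integration-by-parts arguments, driven respectively by the CR commutator identity $f_{\alpha\bar\alpha}-f_{\bar\alpha\alpha}=2n\sqrt{-1}f_0$ for the $f_0^2$ bound and by the PDE $-\Delta_b f = 2n|\partial f|^2 + 2nH(f)$ for the $H^3$ bound. Both arguments use Sasakianity so that the underlying sub-Laplacian IBP is torsion-free, and (3.3) additionally uses the pointwise inequality $|G_\alpha|^2 e^{2(n-1)f} \leq \mathcal{M}$, which follows from the non-negativity of each term on the right-hand side of (3.2) once $s_0$ lies in the admissible range.

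For (3.3), I multiply $2n\sqrt{-1}f_0^2 = f_0(f_{\alpha\bar\alpha}-f_{\bar\alpha\alpha})$ by $\varphi^{s-2}e^{2(n-1)f}$ and integrate; integrating each half by parts in the $\bar\alpha$ and $\alpha$ directions, the $2(n-1)f_{\bar\alpha}f_\alpha$ pieces coming from differentiating $e^{2(n-1)f}$ coincide in the two computations and cancel on subtraction. The surviving terms are a cut-off piece with weight $\varphi^{s-3}(\varphi_{\bar\alpha}f_\alpha-\varphi_\alpha f_{\bar\alpha})f_0$ and a derivative-of-$f_0$ piece $\int\varphi^{s-2}e^{2(n-1)f}(f_{0\bar\alpha}f_\alpha-f_{0\alpha}f_{\bar\alpha})$. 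Using $f_{0\alpha}=\sqrt{-1}(gf_\alpha-G_\alpha)$ and its conjugate, the latter collapses to $2\sqrt{-1}[\operatorname{Re}(G_\alpha f_{\bar\alpha})-(|\partial f|^2+H)|\partial f|^2]$, so that the algebraic $|\partial f|^4$ and $H|\partial f|^2$ summands map directly onto the second and third RHS terms of (3.3). The $\operatorname{Re}(G_\alpha f_{\bar\alpha})$ summand is handled by Cauchy-Schwarz with weight $\epsilon R^2$: $|G_\alpha||\partial f|\leq \epsilon R^2 |G_\alpha|^2 + (\epsilon R^2)^{-1}|\partial f|^2$, and after multiplying by $\varphi^{s-2}$ the $|G_\alpha|^2$-half is absorbed into $\epsilon R^2\int\varphi^s\mathcal{M}$ via $|G_\alpha|^2 e^{2(n-1)f}\leq\mathcal{M}$, while the $|\partial f|^2$-half becomes the $(C/R^2)\int\varphi^{s-4}|\partial f|^2 e^{2(n-1)f}$ remainder. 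A parallel Cauchy-Schwarz on the cut-off piece (using $|\partial\varphi|\leq C/R$) produces another $(C/R^2)\int\varphi^{s-4}|\partial f|^2$ remainder together with a small $\epsilon\int\varphi^{s-2}e^{2(n-1)f}f_0^2$ that is absorbed back into the left-hand side.

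For (3.4), I substitute $H(f)=-\tfrac{1}{2n}\Delta_b f - |\partial f|^2$ and multiply by $\varphi^s e^{2(n-1)f}H^2(f)$, obtaining
\begin{align*}
\int\varphi^s e^{2(n-1)f}H^3+\int\varphi^s e^{2(n-1)f}H^2|\partial f|^2=-\frac{1}{2n}\int\varphi^s e^{2(n-1)f}H^2\Delta_b f.
\end{align*}
Integrating the sub-Laplacian by parts and expanding $\nabla_b(\varphi^s e^{2(n-1)f}H^2(f))$ via the product and chain rules produces a cut-off piece $\sim\varphi^{s-1}|\partial\varphi||\partial f|H^2$, a piece $\sim\varphi^s|\partial f|^2 H^2$ from the derivative of $e^{2(n-1)f}$, and a piece $\sim\varphi^s|\partial f|^2 HH'$ from the chain rule on $H^2$. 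Subcriticality gives $H'\leq (p+2)H$ with $p+2>0$; together with $H,H'\geq 0$ (which the hypothesis $tF'-\kappa F\geq 0$ with $\kappa>0$ ensures via $F'\geq 0$), this forces $HH'\leq CH^2$ and merges the chain-rule piece into the middle one. A single Cauchy-Schwarz on the cut-off piece yields the $(C/R^2)\int\varphi^{s-2}e^{2(n-1)f}H^2$ term and an absorbable $\epsilon\int\varphi^s e^{2(n-1)f}H^2|\partial f|^2$ remainder. The main obstacle is the sign and complex-conjugate bookkeeping in (3.3): verifying the precise cancellation of the $e^{2(n-1)f}$ cross-terms between the two IBPs, confirming that $f_{0\bar\alpha}f_\alpha-f_{0\alpha}f_{\bar\alpha}$ is purely imaginary and simplifies to the real quantity above after division by $2n\sqrt{-1}$, and that every coefficient ends up with the intended sign; (3.4) is by comparison a routine multiplier argument once $H$ is written in terms of $\Delta_b f$.
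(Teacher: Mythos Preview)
Your proposal is correct and follows essentially the same route as the paper: for (3.3) the paper writes the pointwise divergence identity $\operatorname{Re}\,\eta_{\bar\alpha}\{\sqrt{-1}f_0 f_\alpha e^{2(n-1)f}\}=-\operatorname{Re}(G_{\bar\alpha}f_\alpha)+|\partial f|^2H+|\partial f|^4-nf_0^2$, multiplies by $\varphi^{s-2}$, integrates by parts once, and applies the same Young splittings you describe, while your commutator start $f_{\alpha\bar\alpha}-f_{\bar\alpha\alpha}=2n\sqrt{-1}f_0$ is the same computation reorganized (the $2(n-1)f_0|\partial f|^2$ cross-terms do cancel as you predict); for (3.4) both you and the paper multiply $f_{\alpha\bar\alpha}=-ng$ by $\varphi^s e^{2(n-1)f}H^2$ and integrate by parts. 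One minor correction: you do not need $H'\geq 0$ (and it is not guaranteed by the hypotheses when $\kappa\leq 1$, since $H'\geq 0$ amounts to $uF'(u)\geq F(u)$, not just $F'\geq 0$); the estimate $HH'\leq(p+2)H^2$ already follows from $H\geq 0$ and $H'\leq(p+2)H$ alone, because when $H'<0$ the left side is nonpositive.
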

            \begin{proof}
                By the definition of $f$, we have
                \begin{align*}
                    &e^{-2(n-1)f}Re \nabla_{\eta_{\bar{\alpha}}}\{\sqrt{-1}f_0f_\alpha  e^{2(n-1)f}\}
                    \\
                    =&Re\{\sqrt{-1}f_{0\bar{\alpha}}f_\alpha+\sqrt{-1}f_0f_{\alpha\bar{\alpha}} + 2(n-1)\sqrt{-1}f_0|\partial f|^2\}
                    \\
                    =&Re\{(-G_{\bar{\alpha}}+\sqrt{-1}f_0f_{\bar{\alpha}}+H(f)f_{\bar{\alpha}}+|\partial f|^2f_{\bar{\alpha}})f_\alpha-n\sqrt{-1}f_0g \}
                    \\
                    =& -ReG_{\bar{\alpha}}f_\alpha+|\partial f|^2H(f)+|\partial f|^4-nf_0^2.
                \end{align*}
                Multiplying both sides of the above equation by $\varphi^{s-2}e^{2(n-1)f}$, then we have
                \begin{align}
                    &\int_M \varphi^{s-2}Re \nabla_{\eta_{\bar{\alpha}}}\{\sqrt{-1}f_0f_\alpha  e^{2(n-1)f}\}
                    \nonumber\\
                    =&\int_M \varphi^{s-2}(-ReG_{\bar{\alpha}}f_\alpha+|\partial f|^2H(f)+|\partial f|^4-nf_0^2 )e^{2(n-1)f}.
                \end{align}
            Integrating by part and using Young’s inequality, we get the following
            \begin{align*}
                &\int_M \varphi^{s-2} nf_0^2e^{2(n-1)f}
                \\
                \leq & \int_M \varphi^{s-2}(|\partial f|^2H(f)+|\partial f|^4)e^{2(n-1)f}+\int_M \varphi^{s-2}|G_\alpha|\cdot|\partial f|e^{2(n-1)f}
                \\
                &+\frac{C}{R}\int_M \varphi^{s-3}|f_0|\cdot |\partial f|e^{2(n-1)f}
                \\
                \leq & C\int_M \varphi^{s-2}(|\partial f|^2H(f)+|\partial f|^4)e^{2(n-1)f}+\epsilon R^2\int_M \varphi^{s}|G_\alpha|^2e^{2(n-1)f}
                \\
                &+\epsilon \int_M \varphi^{s-2}f_0^2e^{2(n-1)f}+\frac{C}{R^2}\int_M\varphi^{s-4}|\partial f|^2e^{2(n-1)f}.
            \end{align*}
            Since $\mathcal{M}\geq |G_\alpha|^2e^{2(n-1)f}$, we conclude that 
            \begin{align*}
                \int_M \varphi^{s-2} f_0^2e^{2(n-1)f}\leq &C\int_M \varphi^{s-2}(|\partial f|^2H(f)+|\partial f|^4)e^{2(n-1)f}+\epsilon R^2\int_M \varphi^{s}\mathcal{M}
                \\
                &+\frac{C}{R^2}\int_M\varphi^{s-4}|\partial f|^2e^{2(n-1)f}.
            \end{align*}
            This gives (3.3). The proof of (3.4) is similar. Multiplying both sides of the equation $f_{\alpha\bar{\alpha}}=-ng$ by $\varphi^{s}e^{2(n-1)f}H^2(f)$, we have
            \begin{align*}
                & n\int_M \varphi^sg e^{2(n-1)f}H^2(f)=-\int_M \varphi^s f_{\alpha\bar{\alpha}}e^{2(n-1)f}H^2(f)
                \\
                =&s\int_M \varphi^{s-1}\varphi_{\bar{\alpha}}f_\alpha e^{2(n-1)f}H^2(f)+2(n-1)\int_M \varphi^s|\partial f|^2e^{2(n-1)f}H^2(f)
                \\
                &+2\int_M \varphi^s|\partial f|^2e^{2(n-1)f}H(f)H^{'}(f).
            \end{align*}
            Taking the real parts and using $(p+2)H-H^{'}\geq 0,$ we obtain
            \begin{align*}
                & n\int_M \varphi^s(H(f)+|\partial f|^2 ) e^{2(n-1)f}H^2(f)
                \\
                =&s\int_M \varphi^{s-1}Re(\varphi_{\bar{\alpha}}f_\alpha) e^{2(n-1)f}H^2(f)+2(n-1)\int_M \varphi^s|\partial f|^2e^{2(n-1)f}H^2(f)
                \\
                &+2\int_M \varphi^s|\partial f|^2e^{2(n-1)f}H(f)H^{'}(f)
                \\
                \leq & s\int_M \varphi^{s-1}Re(\varphi_{\bar{\alpha}}f_\alpha) e^{2(n-1)f}H^2(f)+2(n+p+1)\int_M \varphi^s|\partial f|^2e^{2(n-1)f}H^2(f).
            \end{align*}
            Thus 
            \begin{align*}
                \int_M \varphi^s e^{2(n-1)f}H^3(f)\leq & \frac{C}{R}\int_M \varphi^{s-1}|\partial f| e^{2(n-1)f}H^2(f)+C\int_M \varphi^s|\partial f|^2e^{2(n-1)f}H^2(f)
                \\
                \leq & \frac{C}{R^2}\int_M \varphi^{s-2}e^{2(n-1)f}H^2(f)+C\int_M \varphi^s|\partial f|^2e^{2(n-1)f}H^2(f).
            \end{align*}
            This completes the proof.
            \end{proof}
            Now we are ready to prove Theorem 1.1 and Theorem 1.2. We first give a proof of Theorem 1.2.
            
            ~\\
            $\mathbf{Proof\ of\ Theorem\ 1.2.}$  Multiplying $\varphi^s$ on both sides of (2.9) and integrating over $M$ give
            \begin{align*}
                \int_M \varphi^s\mathcal{M}=&\int_M \varphi^s Re \nabla_{\eta_{\bar{\alpha}}}\{e^{2(n-1)f}[(D_\alpha+E_\alpha)(|\partial f|^2+H(f)) \nonumber
                \\
                &-\sqrt{-1}f_0(2D_\alpha-2E_\alpha+3G_\alpha)-\frac{p}{4}f_\alpha |\partial f|^4] \}
                \\
                =&-s\int_M \varphi^{s-1} Re\ \varphi_{\bar{\alpha}} \{e^{2(n-1)f}[(D_\alpha+E_\alpha)(|\partial f|^2+H(f)) \nonumber
                \\
                &\quad -\sqrt{-1}f_0(2D_\alpha-2E_\alpha+3G_\alpha)-\frac{p}{4}f_\alpha |\partial f|^4] \}
                \\
                \leq &\frac{C}{R} \int_M \varphi^{s-1}  \{e^{2(n-1)f}[|D_\alpha+E_\alpha|\cdot (|\partial f|^2+H(f)) \nonumber
                \\
                &\quad +|f_0|\cdot|2D_\alpha-2E_\alpha+3G_\alpha|+|\partial f|^5] \}
                \\
                \leq &\frac{C}{R} \int_M \varphi^{s-1}  \{e^{2(n-1)f}[(|D_\alpha+G_\alpha|+|E_\alpha-G_\alpha|) (|\partial f|^2+H(f)) \nonumber
                \\
                &\quad +|f_0|\cdot(2|D_\alpha+G_\alpha|+2|E_\alpha-G_\alpha|+|G_\alpha|)+|\partial f|^5] \}
                \\
                \leq & \epsilon \int_M \varphi^{s}(|D_\alpha+G_\alpha|^2+|E_\alpha-G_\alpha|^2+|G_\alpha|^2)e^{2(n-1)f}
                \\
                &+\frac{C}{R^2}\int_M \varphi^{s-2}(|\partial f|^4+H^2(f)+f^2_0 )e^{2(n-1)f}+\frac{C}{R}\int_M \varphi^{s-1}|\partial f|^5e^{2(n-1)f},
            \end{align*}
            where $\epsilon$ is a small positive constant. Noticing that the coefficients are positive in (3.2), we obtain
            \begin{align}
                \int_M \varphi^s\mathcal{M}\leq &\frac{C}{R^2}\int_M \varphi^{s-2}(|\partial f|^4+H^2(f)+f^2_0 )e^{2(n-1)f}+\frac{C}{R}\int_M \varphi^{s-1}|\partial f|^5e^{2(n-1)f} \nonumber
                \\
                \leq & \frac{C}{R^2}\int_M \varphi^{s-2}(|\partial f|^4+H^2(f)+ |\partial f|^2H(f))e^{2(n-1)f} 
                \\
                &+\frac{C}{R^4}\int_M \varphi^{s-4}|\partial f|^2e^{2(n-1)f}+\frac{C}{R}\int_M \varphi^{s-1}|\partial f|^5e^{2(n-1)f}. \nonumber
            \end{align}
            Here, we use (3.3) to deal with the term $f_0^2$ in the last equality. Using Young’s inequality again, we have
            \begin{align}
                \frac{1}{R}\int_M \varphi^{s-1}|\partial f|^5e^{2(n-1)f}\leq & \epsilon \int_M \varphi^{s}|\partial f|^6e^{2(n-1)f}+\frac{C}{R^6}\int_M \varphi^{s-6}e^{2(n-1)f},
                \\
                \frac{1}{R^4}\int_M \varphi^{s-4}|\partial f|^2e^{2(n-1)f}\leq &\epsilon \int_M \varphi^{s}|\partial f|^6e^{2(n-1)f}+\frac{C}{R^6}\int_M \varphi^{s-6}e^{2(n-1)f},
                \\
                \frac{1}{R^2}\int_M \varphi^{s-2} |\partial f|^2H(f)e^{2(n-1)f}\leq &\epsilon \int_M \varphi^{s}|\partial f|^4H(f)e^{2(n-1)f}\nonumber
                \\
                &+\frac{C}{R^4}\int_M \varphi^{s-4}e^{2(n-1)f}H(f),
            \end{align}
            and
            \begin{align}
                \frac{1}{R^2}\int_M \varphi^{s-2}|\partial f|^4e^{2(n-1)f}\leq &\epsilon \int_M \varphi^{s}|\partial f|^6e^{2(n-1)f}+\frac{C}{R^6}\int_M \varphi^{s-6}e^{2(n-1)f}.
            \end{align}
            Taking $\epsilon$ small enough and substituting (3.7)-(3.10) into (3.6), we obtain
            \begin{align}
                \int_M \varphi^s\mathcal{M}\leq &\frac{C}{R^6}\int_M \varphi^{s-6}e^{2(n-1)f}+\frac{C}{R^4}\int_M \varphi^{s-4}e^{2(n-1)f}H(f) \nonumber
                \\
                &+\frac{C}{R^2}\int_M \varphi^{s-2}e^{2(n-1)f}H^2(f).
            \end{align}
            From (3.4) and 
            \begin{align*}
                \mathcal{M}\geq C|\partial f|^2e^{2(n-1)f}H^2(f),
            \end{align*}
             we conclude that 
            \begin{align}
                &\int_M \varphi^se^{2(n-1)f}H^3(f)
                \nonumber\\
                \leq &C \int_M \varphi^s\mathcal{M}+\frac{C}{R^2}\int_M \varphi^{s-2}e^{2(n-1)f}H^2(f) \nonumber
                \\
                \leq &\frac{C}{R^6}\int_M \varphi^{s-6}e^{2(n-1)f}+\frac{C}{R^4}\int_M \varphi^{s-4}e^{2(n-1)f}H(f) \nonumber
                \\
                &+\frac{C}{R^2}\int_M \varphi^{s-2}e^{2(n-1)f}H^2(f).
            \end{align}
            According to $\liminf\limits_{t\xrightarrow{} \infty} t^{-\lambda}F(t)>0 $, there $t_0>1$ such that 
            \begin{align*}
                F(t)\geq Ct^\lambda,\quad \forall t\geq t_0.
            \end{align*}
            Without loss of generality, we assume that $1<\lambda<1+\frac{2}{n}.$ Since $F$ is subcritical, then we have
            \begin{align*}
                F(t)\geq \frac{F(t_0)}{t_0^\sigma}t^\sigma\geq Ct^\sigma,\quad \forall 0< t\leq t_0.
            \end{align*}
            By the definition of $H$, there exists a $f_0$ such that 
            \begin{align}
                H(f)=\frac{F(e^{nf})}{e^{nf}}\geq Ce^{n(\lambda-1)f}, \quad \forall f\geq f_0
            \end{align}
            and
            \begin{align}
                H(f)=\frac{F(e^{nf})}{e^{nf}}\geq Ce^{n(\sigma-1)f}, \quad \forall f\leq f_0.
            \end{align}
            Let $a=n(\lambda-1),\ b=n(\sigma-1) $, then we have $0<a,b<2 $. In the case $f\geq f_0$, from (3.13), we obtain
            \begin{align}
                e^{2(n-1)f}H^2(f)\leq& C e^{2(n-1)f}H^2(f)(e^{-af}H(f) )^{\frac{2(n-1)}{2(n-1)+3a}}
                \nonumber\\
                \leq &C e^{2(n-1)\frac{2(n-1)+2a}{2(n-1)+3a}f}H^{\frac{6(n-1)+6a}{2(n-1)+3a}}(f),
                \\
                e^{2(n-1)f}H(f)\leq& C e^{2(n-1)f}H(f)(e^{-af}H(f) )^{\frac{4(n-1)}{2(n-1)+3a}}
                \nonumber\\
                \leq &C e^{2(n-1)\frac{2(n-1)+a}{2(n-1)+3a}f}H^{\frac{6(n-1)+3a}{2(n-1)+3a}}(f),
            \end{align}
            and
            \begin{align}
                e^{2(n-1)f}\leq& C e^{2(n-1)f}(e^{-af}H(f) )^{\frac{6(n-1)}{2(n-1)+3a}}
                \nonumber\\
                \leq &C e^{2(n-1)\frac{2(n-1)}{2(n-1)+3a}f}H^{\frac{6(n-1)}{2(n-1)+3a}}(f).
            \end{align}
            From (3.15)-(3.17) and Young's inequality, we have
            \begin{align}
                &\frac{1}{R^2}\int_{\{f\geq f_0\}} \varphi^{s-2}e^{2(n-1)f}H^2(f)
                \nonumber\\
                \leq& C \int_{\{f\geq f_0\}} \varphi^{\frac{2(n-1)+2a}{2(n-1)+3a}s}e^{2(n-1)\frac{2(n-1)+2a}{2(n-1)+3a}f}H^{\frac{6(n-1)+6a}{2(n-1)+3a}}(f)\cdot \varphi^{\frac{a}{2(n-1)+3a}s-2}R^{-2} \nonumber
                \\
                \leq& \epsilon \int_{\{f\geq f_0\}} \varphi^se^{2(n-1)f}H^3(f)+CR^{-2\frac{2(n-1)+3a}{a} }\int_{\{f\geq f_0\}} \varphi^{ s-2\frac{2(n-1)+3a}{a}},
                \\
                &\frac{1}{R^4}\int_{\{f\geq f_0\}} \varphi^{s-4}e^{2(n-1)f}H(f)
                \nonumber\\
                \leq& C \int_{\{f\geq f_0\}} \varphi^{\frac{2(n-1)+a}{2(n-1)+3a}s}e^{2(n-1)\frac{2(n-1)+a}{2(n-1)+3a}f}H^{\frac{6(n-1)+3a}{2(n-1)+3a}}(f)\cdot \varphi^{\frac{2a}{2(n-1)+3a}s-4}R^{-4} \nonumber
                \\
                \leq& \epsilon \int_{\{f\geq f_0\}} \varphi^se^{2(n-1)f}H^3(f)+CR^{-2\frac{2(n-1)+3a}{a} }\int_{\{f\geq f_0\}} \varphi^{ s-2\frac{2(n-1)+3a}{a}},             
            \end{align}
            and
            \begin{align}
                &\frac{1}{R^6}\int_{\{f\geq f_0\}} \varphi^{s-6}e^{2(n-1)f}
                \nonumber\\
                \leq& C \int_{\{f\geq f_0\}} \varphi^{\frac{2(n-1)}{2(n-1)+3a}s}e^{2(n-1)\frac{2(n-1)}{2(n-1)+3a}f}H^{\frac{6(n-1)}{2(n-1)+3a}}(f)\cdot \varphi^{\frac{3a}{2(n-1)+3a}s-6}R^{-4} \nonumber
                \\
                \leq& \epsilon \int_{\{f\geq f_0\}} \varphi^se^{2(n-1)f}H^3(f)+CR^{-2\frac{2(n-1)+3a}{a} }\int_{\{f\geq f_0\}} \varphi^{ s-2\frac{2(n-1)+3a}{a}},     
            \end{align}
            where $\epsilon$ is a small positive constant. In the case $f\leq f_0$, similar to $f\geq f_0$, we have
            \begin{align}
                 &\frac{1}{R^2}\int_{\{f\leq f_0\}} \varphi^{s-2}e^{2(n-1)f}H^2(f)+\frac{1}{R^4}\int_{\{f\leq f_0\}} \varphi^{s-4}e^{2(n-1)f}H(f) \nonumber
                 \\
                 &+\frac{1}{R^6}\int_{\{f\leq f_0\}} \varphi^{s-6}e^{2(n-1)f}
                 \\
                 \leq& \epsilon \int_{\{f\leq f_0\}} \varphi^se^{2(n-1)f}H^3(f)+CR^{-2\frac{2(n-1)+3b}{b} }\int_{\{f\leq f_0\}} \varphi^{ s-2\frac{2(n-1)+3b}{b}}.\nonumber
            \end{align}
            Substituting (3.18)-(3.21) into (3.12), we have
            \begin{align*}
                &\int_M\varphi^se^{2(n-1)f}H^3(f)
                \\
                \leq &\frac{C}{R^6}\int_{\{f\leq f_0\}\cup  \{f\geq f_0\}} \varphi^{s-6}e^{2(n-1)f}+\frac{C}{R^4}\int_{\{f\leq f_0\}\cup  \{f\geq f_0\}} \varphi^{s-4}e^{2(n-1)f}H(f) \nonumber
                \\
                &+\frac{C}{R^2}\int_{\{f\leq f_0\}\cup  \{f\geq f_0\}} \varphi^{s-2}e^{2(n-1)f}H^2(f)
                \\
                \leq & \epsilon \int_{\{f\geq f_0\}} \varphi^se^{2(n-1)f}H^3(f)+CR^{-2\frac{2(n-1)+3a}{a} }\int_{\{f\geq f_0\}} \varphi^{ s-2\frac{2(n-1)+3a}{a}} \\
                &+\epsilon \int_{\{f\leq f_0\}} \varphi^se^{2(n-1)f}H^3(f)+CR^{-2\frac{2(n-1)+3b}{b} }\int_{\{f\leq f_0\}} \varphi^{ s-2\frac{2(n-1)+3b}{b}}.
            \end{align*}
           Taking $s$ large enough, $\epsilon$ small enough and using the volume condition, we have
           \begin{align*}
               &\int_{B_R}e^{2(n-1)f}H^3(f) \leq CR^{2n+2-2\frac{2(n-1)+3a}{a} }+CR^{2n+2-2\frac{2(n-1)+3b}{b} }.
           \end{align*}
           Note that $0<a,b<2$, hence
           \begin{align*}
              2n+2-2\frac{2(n-1)+3a}{a}<0,\quad  2n+2-2\frac{2(n-1)+3b}{b}<0.
           \end{align*}
           Letting $R\rightarrow \infty$, we have 
           \begin{align*}
               \int_{M}e^{2(n-1)f}H^3(f)=0.
           \end{align*}
           Thus, we conclude that $H(f)\equiv 0$, i.e. $F(u)\equiv0$. Hence $u$ is a psuedoharmonic function. In particular, $u$ is a constant (\cite{Re,BGM}). 
           {\qed}
           \par
           The proof of Theorem 1.1 is similar, we give a brief proof.

           ~\\
           $\mathbf{Proof\ of\ Theorem\ 1.1.}$ The proof is the same as in Theorem 1.2 until (3.12). Noting that $n=1$, then we have
           \begin{align}
                \int_M \varphi^s H^3(f)\leq  &\frac{C}{R^6}\int_M \varphi^{s-6}+\frac{C}{R^4}\int_M \varphi^{s-4}H(f) 
                +\frac{C}{R^2}\int_M \varphi^{s-2}H^2(f) \nonumber
                \\
                \leq &\epsilon \int_M \varphi^s H^3(f)+\frac{C}{R^6}\int_M \varphi^{s-6}.
            \end{align}
         Taking $\epsilon$ small enough, then Theorem 1.4 yields that
         \begin{align*}
             \int_{B_R} \varphi^s H^3(f)\leq C R^{-2}.
         \end{align*}
         Letting $R\rightarrow \infty$, we have $H(f)\equiv 0$. As above, we conclude that $u$ is a constant.

         \section{Proof of Theorem 1.6 and Theorem 1.7}
         In this section, we assume that $u$ is a positive solution of (1.7). In other words, for any $\psi\in C^{\infty}_c(M)$ with $\psi\geq 0$, we have
             \begin{align}
                -\int_M\langle \nabla_b u, \nabla_b \psi \rangle+\int_M F\psi  \leq 0,
            \end{align}
         where $\langle,\rangle $ is the inner product.
         In order to prove Theorem 1.6 and Theorem 1.7, we establish the following lemma.
         \begin{lemma}
             Taking $s>0$ large enough and $a\in (0,min\{\sigma-1,1\})$. Then there exists a constant $C>0$ such that for every function $ 0\leq \varphi\leq 1$ with $\varphi \in C_{c}^{\infty}(M)$, we have
             \begin{align}
                \int_M u^{-\frac{\sigma}{\sigma-1}}F^{\frac{\sigma}{\sigma-1}}\varphi^s\leq&  C a^{-1-\frac{1-a}{2(\sigma-1)}}\left(\int_{M\backslash K} u^{-\frac{\sigma}{\sigma-1}}F^{\frac{\sigma}{\sigma-1}}\varphi^s\right)^{\frac{a+1}{2\sigma}} \nonumber
                \\
                &\cdot    \left(\int_M |\nabla_b\varphi|^{\frac{2(\sigma-a)}{\sigma-1}}\right)^{\frac{1}{2}}  \left(\int_M |\nabla_b\varphi|^{\frac{2\sigma}{\sigma-(a+1)}} \right)^{\frac{\sigma-(a+1)}{2\sigma}}           
            \end{align}
            and
            \begin{align}
                \left(\int_M u^{-\frac{\sigma}{\sigma-1}}F^{\frac{\sigma}{\sigma-1}}\varphi^s\right)^{1-\frac{a+1}{2\sigma}}\leq & C a^{-1-\frac{1-a}{2(\sigma-1)}}\left(\int_M |\nabla_b\varphi|^{\frac{2(\sigma-a)}{\sigma-1}}\right)^{\frac{1}{2}}  \nonumber
                \\
                &\cdot \left(\int_M |\nabla_b\varphi|^{\frac{2\sigma}{\sigma-(a+1)}} \right)^{\frac{\sigma-(a+1)}{2\sigma}},
            \end{align}
            where $K=\{x\in M| \varphi(x)=1\}$.
         \end{lemma}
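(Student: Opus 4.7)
The plan is to derive both inequalities from a test-function Caccioppoli estimate combined with a three-factor H\"older inequality. First I would substitute $\psi = u^{-a}\varphi^s$ (a legitimate $C^\infty_c(M)$ test function since $u>0$) into the weak formulation (4.1). Computing $\nabla_b\psi = -au^{-a-1}\varphi^s\nabla_b u + su^{-a}\varphi^{s-1}\nabla_b\varphi$ and substituting gives
\[
\int_M Fu^{-a}\varphi^s + a\int_M u^{-a-1}|\nabla_b u|^2\varphi^s \le s\int_M u^{-a}\varphi^{s-1}\langle\nabla_b u,\nabla_b\varphi\rangle.
\]
Applying Young's inequality to the cross term so as to absorb a proportion $a/2$ of the $|\nabla_b u|^2$-integral yields the Caccioppoli bound
\[
\int_M Fu^{-a}\varphi^s + \frac{a}{2}\int_M u^{-a-1}|\nabla_b u|^2\varphi^s \le \frac{s^2}{2a}\int_M u^{1-a}\varphi^{s-2}|\nabla_b\varphi|^2, \qquad (\ast)
\]
from which the $a^{-1}$ piece of the final coefficient $Ca^{-1-(1-a)/(2(\sigma-1))}$ originates.

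Next, to introduce the quantity $W := \int_M u^{-\sigma/(\sigma-1)}F^{\sigma/(\sigma-1)}\varphi^s$, I would use the pointwise factorization
\[
Fu^{-a} = \bigl(u^{-\sigma/(\sigma-1)}F^{\sigma/(\sigma-1)}\bigr)^{(\sigma-1)/\sigma}\cdot u^{1-a}
\]
together with H\"older's inequality at conjugate exponents $\sigma/(\sigma-1)$ and $\sigma$ to transfer between $\int Fu^{-a}\varphi^s$ and $W$. Combining this transfer with $(\ast)$ and raising the resulting relation to the fractional power $(1-a)/(2(\sigma-1))$ --- which is precisely where the additional factor $a^{-(1-a)/(2(\sigma-1))}$ enters --- reduces the target to bounding $W$ above by
\[
C\,a^{-1-(1-a)/(2(\sigma-1))}\int_{M\setminus K} u^{-(a+1)/(2(\sigma-1))}F^{(a+1)/(2(\sigma-1))}\varphi^{s(a+1)/(2\sigma)}|\nabla_b\varphi|^{(2\sigma-a-1)/(\sigma-1)}.
\]
I would then apply a three-factor H\"older inequality with conjugate exponents $p_1 = 2\sigma/(a+1)$, $p_2 = 2$, $p_3 = 2\sigma/(\sigma-(a+1))$ --- which satisfy $1/p_1+1/p_2+1/p_3 = 1$ --- decomposing the integrand into the product
\[
\bigl[u^{-\sigma/(\sigma-1)}F^{\sigma/(\sigma-1)}\varphi^s\bigr]^{(a+1)/(2\sigma)}\cdot|\nabla_b\varphi|^{(\sigma-a)/(\sigma-1)}\cdot|\nabla_b\varphi|.
\]
This yields exactly the three factors $W_{M\setminus K}^{(a+1)/(2\sigma)}$, $I_1^{1/2}$, and $I_2^{(\sigma-a-1)/(2\sigma)}$ on the right of (4.2); the first factor is restricted to $M\setminus K$ because $|\nabla_b\varphi|$ vanishes on $K = \{\varphi = 1\}$, so the integrand is supported there.

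Inequality (4.3) is then immediate from (4.2): since $W_{M\setminus K}\le W$, substituting and dividing through by $W^{(a+1)/(2\sigma)}$ (the case $W=0$ being trivial) gives the claimed bound on $W^{1-(a+1)/(2\sigma)}$. The main obstacle will be the careful bookkeeping required to combine $(\ast)$ with the H\"older manipulations so as to extract the exact coefficient $Ca^{-1-(1-a)/(2(\sigma-1))}$; the hypothesis $a\in(0,\min\{\sigma-1,1\})$ is exactly what makes this work, since it simultaneously guarantees $a+1<2\sigma$ (so $p_1>1$) and $\sigma-(a+1)>0$ (so $p_3$ is positive and finite), keeping all intermediate H\"older exponents admissible.
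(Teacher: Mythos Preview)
Your proposal has a genuine gap in the central reduction step, and as written it cannot be completed.

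The single test function $\psi=u^{-a}\varphi^{s}$ gives the Caccioppoli bound $(\ast)$, but its right-hand side is $\tfrac{s^{2}}{2a}\int_{M}u^{1-a}\varphi^{s-2}|\nabla_{b}\varphi|^{2}$, a term carrying a \emph{positive} power of $u$ with no factor of $F$. Nothing in your scheme controls this: your ``transfer'' identity $Fu^{-a}=(u^{-\sigma/(\sigma-1)}F^{\sigma/(\sigma-1)})^{(\sigma-1)/\sigma}u^{1-a}$ together with H\"older produces an \emph{upper} bound on $\int Fu^{-a}\varphi^{s}$ in terms of $W$, not a lower bound, so it cannot be inverted to dominate $W$. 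The sentence ``combining this transfer with $(\ast)$ and raising to the power $(1-a)/(2(\sigma-1))$ reduces the target to\ldots'' is the crux of the argument and does not follow from what precedes it. Note also that you never invoke the subcriticality hypothesis $\sigma F(t)-tF'(t)\ge 0$; without it the lemma is false in general.

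The paper's proof differs exactly here. It uses \emph{two} test functions, each containing a power of the non-increasing quantity $u^{-\sigma}F$. First $\psi_{1}=u^{-a}(u^{-\sigma}F)^{b}\varphi^{s}$ with $b=\tfrac{1-a}{\sigma-1}$: differentiating produces an $F'$ term which subcriticality absorbs, and the resulting Caccioppoli inequality has right-hand side $\tfrac{C}{a}\int (u^{-1}F)^{(1-a)/(\sigma-1)}\varphi^{s-2}|\nabla_{b}\varphi|^{2}$; a further Young inequality converts this into a pure gradient integral $Ca^{-(\sigma-a)/(\sigma-1)}\int|\nabla_{b}\varphi|^{2(\sigma-a)/(\sigma-1)}$, with the $F$-weighted Dirichlet energy $\int|\nabla_{b}u|^{2}u^{-(2\sigma-a-1)/(\sigma-1)}F^{(1-a)/(\sigma-1)}\varphi^{s}$ controlled on the left. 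Second, $\psi_{2}=(u^{-\sigma}F)^{1/(\sigma-1)}\varphi^{s}$ is inserted into (4.1); after subcriticality kills the $F'$ term, Cauchy--Schwarz against the $F$-weighted Dirichlet energy just obtained, followed by H\"older on the remaining factor, yields (4.2). The presence of $F^{b}$ in the test functions is precisely what replaces your uncontrolled $u^{1-a}$ by powers of $u^{-1}F$, which can then be reabsorbed.

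Your three-factor H\"older at the end is algebraically correct and would finish the proof if the intermediate bound on $W$ were available; it is the route to that bound that is missing.
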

         \begin{proof}
             It is easy to see that (4.3) can be derived directly from (4.2). Hence, we only need prove (4.2). Let $\psi=u^{-a}(u^{-\sigma}F)^b\varphi^s=u^{-a-b\sigma}F^b\varphi^s$, where $b>0$ is a constant to be determined later. Thus 
             \begin{align*}
                \nabla_b\psi=&-(a+b\sigma)u^{-a-b\sigma-1}F^b\varphi^s\nabla_bu+bu^{-a-b\sigma}F^{b-1}F^{'}\varphi^s\nabla_b u
                \\
                &+su^{-a-b\sigma}F^b\varphi^{s-1}\nabla_b\varphi.
            \end{align*}
            Multiplying $\psi$ on both sides of (1.7), (4.1) implies that 
            \begin{align*}
                &(a+b\sigma)\int_M |\nabla_bu|^2 u^{-a-b\sigma-1}F^b\varphi^s -b\int_M |\nabla_bu|^2 u^{-a-b\sigma}F^{b-1}F^{'}\varphi^s 
                \\
                \leq& s\int_M  u^{-a-b\sigma}F^b\varphi^{s-1}\langle \nabla_bu,\nabla_b \varphi\rangle -\int_Mu^{-a-b\sigma}F^{b+1}\varphi^s.
            \end{align*}
            Noticing that $ F$ is subcritical with exponent $\sigma$, we obtain
            \begin{align}
           &a \int_M |\nabla_bu|^2 u^{-a-b\sigma-1}F^b\varphi^s +\int_Mu^{-a-b\sigma}F^{b+1}\varphi^s 
           \nonumber
           \\
           \leq& s\int_M  u^{-a-b\sigma}F^b\varphi^{s-1}\langle \nabla_bu,\nabla_b \varphi\rangle .
        \end{align}
        For the right hand side (4.4), using Young's inequality, we have
        \begin{align*}
          & s \int_M u^{-a-b\sigma}F^b\varphi^{s-1}\langle \nabla_bu,\nabla_b \varphi\rangle  
          \leq s \int_M |\nabla_bu| u^{-a-b\sigma}F^b\varphi^{s-1}|\nabla_b \varphi|  
        \\
        \leq & \int_M (u^{-\frac{(a+b\sigma+1)}{2}}F^{\frac{1}{2}b}|\nabla_bu|a^{\frac{1}{2}}\varphi^{\frac{1}{2}s})
        \cdot (a^{-\frac{1}{2}}u^{\frac{(a+b\sigma+1)}{2}-a-b\sigma}F^{\frac{2}{b}}\varphi^{\frac{s}{2}-1}s|\nabla_b\varphi| )
        \\
        \leq &\frac{1}{2}a \int_M |\nabla_bu|^2 u^{-a-b\sigma-1}F^b\varphi^s +\frac{C}{a}\int_M u^{-a-b\sigma+1}F^b\varphi^{s-2} |\nabla_b \varphi|^2 .
        \end{align*}
        Combing with (4.4), we get
        \begin{align}
            &\frac{a}{2}\int_M |\nabla_bu|^2 u^{-a-b\sigma-1}F^b\varphi^s  +\int_Mu^{-a-b\sigma}F^{b+1}\varphi^s
           \nonumber
           \\
           \leq& \frac{C}{a}\int_M u^{-a-b\sigma+1}F^b\varphi^{s-2} |\nabla_b \varphi|^2
           . 
        \end{align}
        Let $b=\frac{1-a}{\sigma-1}$ and using Young's inequality, we obtain
        \begin{align}
            & \frac{C}{a}\int_M u^{-a-b\sigma+1}F^b\varphi^{s-2}|\nabla_b\varphi|^2=\frac{C}{a}\int_M u^{-\frac{1-a}{\sigma-1}}F^{\frac{1-a}{\sigma-1} }\varphi^{s-2}|\nabla_b\varphi|^2  \nonumber
           \\
           = & \int_M  u^{-\frac{1-a}{\sigma-1}}F^{\frac{1-a}{\sigma-1} } \varphi^{\frac{1-a}{\sigma-a}s}\cdot\frac{C}{a}\varphi^{\frac{\sigma-1}{\sigma-a}s-2}|\nabla_b\varphi|^2
           \\
           \leq & \frac{1}{2}\int_Mu^{-\frac{\sigma-a}{\sigma-1}}F^{\frac{\sigma-a}{\sigma-1}}\varphi^s+Ca^{-\frac{\sigma-a}{\sigma-1}}\int_M\varphi^{s-2\frac{\sigma-a}{\sigma-1}}|\nabla_b\varphi|^{2\frac{\sigma-a}{\sigma-1}}. \nonumber
        \end{align}
        From (4.5) and (4.6), we find that 
        \begin{align}
            &\frac{a}{2}\int_M |\nabla_bu|^2 u^{-\frac{2\sigma-a-1}{\sigma-1}}F^{\frac{1-a}{\sigma-1} }\varphi^s +\frac{1}{2}\int_Mu^{-\frac{\sigma-a}{\sigma-1}}F^{\frac{\sigma-a}{\sigma-1}}\varphi^s
           \nonumber
           \\
           \leq&Ca^{-\frac{\sigma-a}{\sigma-1}}\int_M\varphi^{s-2\frac{\sigma-a}{\sigma-1}}|\nabla_b\varphi|^{2\frac{\sigma-a}{\sigma-1}}
           \\
           \leq&Ca^{-\frac{\sigma-a}{\sigma-1}}\int_M|\nabla_b\varphi|^{2\frac{\sigma-a}{\sigma-1}}, \nonumber
        \end{align}
        since $0\leq \varphi\leq 1$ and $s $ is large enough.  
        \par
        Let's consider another test function $\psi=(u^{-\sigma} f)^t \varphi^s$ for some $t>0$. Similarly, we have
        \begin{align}
            &\int_M u^{-\sigma t}F^{t+1}\varphi^s \leq s\int_M |\nabla_bu|u^{-\sigma t}F^t\varphi^{s-1}|\nabla_b\varphi|        \nonumber            \\          
            \leq & s\int_M|\nabla_bu|u^{-\frac{(a+b\sigma+1)}{2}}F^{\frac{1}{2}b}\varphi^{\frac{1}{2}s} \cdot u^{\frac{(a+b\sigma+1)}{2}-\sigma t}F^{t-\frac{1}{2}b}\varphi^{\frac{s}{2}-1}|\nabla_b\varphi| 
            \\
            \leq & C\left(\int_M |\nabla_bu|^2u^{-a-b\sigma-1}F^b\varphi^s \right)^{\frac{1}{2}}
            \cdot \left(\int_Mu^{(a+b\sigma+1)-2\sigma t}F^{2t-b}\varphi^{s-2} |\nabla_b\varphi|^2 \right)^{\frac{1}{2}}. \nonumber
        \end{align}
        Let $t=\frac{1}{\sigma-1}$, we have the following estimates
        \begin{align}
            &\int_Mu^{(a+b\sigma+1)-2\sigma t}F^{2t-b}\varphi^{s-2} |\nabla_b\varphi|^2=\int_Mu^{-\frac{a+1}{\sigma-1}}F^{\frac{a+1}{\sigma-1}}\varphi^{s-2} |\nabla_b\varphi|^2  \nonumber
            \\
            =&\int_{M\backslash K}(u^{-\frac{a+1}{\sigma-1}}F^{\frac{a+1}{\sigma-1}}\varphi^{\frac{(a+1)s}{\sigma}}) \nonumber
            \cdot (\varphi^{\frac{\sigma-(a+1)}{\sigma}s-2}|\nabla_b\varphi|^2) \nonumber
            \\
            \leq & \left(\int_{M\backslash K} u^{-\frac{\sigma}{\sigma-1}}F^{\frac{\sigma}{\sigma-1}} \varphi^s\right)^{\frac{a+1}{\sigma}}
            \cdot\left( \int_{M\backslash K} \varphi^{s-\frac{2\sigma}{\sigma-(a+1)}}|\nabla_b\varphi|^{\frac{2\sigma}{\sigma-(a+1)}}\right)^{\frac{\sigma-a-1}{\sigma}}.
        \end{align}
        Substituting the estimate (4.7) and (4.9) into (4.8), we derive that
        \begin{align}
                \int_M u^{-\frac{\sigma}{\sigma-1}}F^{\frac{\sigma}{\sigma-1}}\varphi^s\leq&  C a^{-1-\frac{1-a}{2(\sigma-1)}}\left(\int_{M\backslash K} u^{-\frac{\sigma}{\sigma-1}}F^{\frac{\sigma}{\sigma-1}}\varphi^s\right)^{\frac{(a+1)}{2\sigma}} \nonumber
                \\
                &\cdot    \left(\int_M |\nabla_b\varphi|^{\frac{2(\sigma-a)}{\sigma-1}}\right)^{\frac{1}{2}}  \left(\int_M \varphi^{s-\frac{2\sigma}{\sigma-(a+1)}}|\nabla_b\varphi|^{\frac{2\sigma}{\sigma-(a+1)}} \right)^{\frac{\sigma-(a+1)}{2\sigma}}           
            \end{align}
            Let $s>\frac{2\sigma}{\sigma-(a+1)}$, we get (4.2).
         \end{proof}
         Next, we give the proof of Theorem 1.6.

         ~\\
         $\mathbf{Proof\ of\ Theorem\ 1.6.}$ Let $h$ be a smooth function on such that
         $$  \left\{
        \begin{array}{rcl}
         &0\leq h(t)\leq 1,    & t\in [0,\infty); \\
         &h(t)=1,   &  t\in [0,1];  \\
          & h(t)=0 ,  & t\in[2,\infty]; \\
          & |h^{'}|\leq C,  & t\in[1,2].
        \end{array}
        \right.
        $$
        Define a sequence of functions $\{\varphi_i\}_{i\in \mathbb{N}}$ by
        \begin{align*}
             \varphi_i=i^{-1}\sum_{k=i+1}^{2i}h\left(\frac{r(x)}{2^k}\right),
         \end{align*}
         where $r(x)$ is the Carnot-Carath$\acute{\mathrm{e}}$odory distance from $x$ to a fixed point. Note that the support of $\left\{\nabla_bh\left(\frac{r(x)}{2^k}\right)\right\}_{i\in \mathbb{N}}$ are different from each other. Therefore, we have
         \begin{align}
             |\nabla\varphi_i|^\theta=i^{-\theta}\sum_{k=i+1}^{2i}|\nabla_bh\left(\frac{r(x)}{2^k}\right)|^\theta\leq C i^{-\theta}\sum_{k=i+1}^{2i} 2^{-k\theta} \chi_{\{2^k\leq r(\cdot)\leq 2^{k+1}\}},
         \end{align}
         where $ \chi$ is the characteristic function. Leaving $a=i^{-1}$ and $\varphi=\varphi_i $ in (4.3), we have
         \begin{align}
                \left(\int_M u^{-\frac{\sigma}{\sigma-1}}F^{\frac{\sigma}{\sigma-1}}\varphi_i^s\right)^{1-\frac{i^{-1}+1}{2\sigma}}\leq & C i^{1+\frac{1-i^{-1}}{2(\sigma-1)}}\left(\int_M |\nabla_b\varphi_i|^{\frac{2(\sigma-i^{-1})}{\sigma-1}}\right)^{\frac{1}{2}}  \nonumber
                \\
                &\cdot \left(\int_M |\nabla_b\varphi_i|^{\frac{2\sigma}{\sigma-(i^{-1}+1)}} \right)^{\frac{\sigma-(i^{-1}+1)}{2\sigma}},
            \end{align}
         According to (4.12), we need to estimate the following integral
         \begin{align*}
             J_i(\theta)=\int_M|\nabla_b\varphi_i|^\theta .
         \end{align*}
         For large $i$, using the volume condition, we have
         \begin{align}
             J_i(\theta)=&\int_M|\nabla_b\varphi_i|^\theta \leq Ci^{-\theta}\sum_{k=i+1}^{2i} 2^{-k\theta} V(2^{k+1}) \nonumber
             \\
             \leq &C i^{-\theta}\sum_{k=i+1}^{2i}2^{k(\frac{2\sigma}{\sigma-1}-\theta)}k^{\frac{1}{\sigma-1}}\leq C i^{\frac{1}{\sigma-1} -\theta}\sum_{k=i+1}^{2i}2^{k(\frac{2\sigma}{\sigma-1}-\theta)}.
         \end{align}
         Thus,
         \begin{align}
             &J_i\left( \frac{2(\sigma-i^{-1})}{\sigma-1}\right)\leq C i^{\frac{1}{\sigma-1} -\frac{2(\sigma-i^{-1})}{\sigma-1}}\sum_{k=i+1}^{2i}2^{\frac{2k}{i(\sigma-1)}}\leq Ci^{-\frac{\sigma}{\sigma-1}}
             \\
             &J_i\left( \frac{2\sigma}{\sigma-(i^{-1}+1)}\right) \leq C i^{\frac{1}{\sigma-1} -\frac{2\sigma}{\sigma-(i^{-1}+1)}}\sum_{k=i+1}^{2i}2^{\frac{2k}{i(\sigma-1)}}\leq Ci^{-\frac{\sigma}{\sigma-1}}.
         \end{align}
         From (4.12), (4.14) and (4.15), we obtain
         \begin{align}
             \left(\int_M u^{-\frac{\sigma}{\sigma-1}}F^{\frac{\sigma}{\sigma-1}}\varphi_i^s\right)^{1-\frac{i^{-1}+1}{2\sigma}}\leq & Ci^{1+\frac{1-i^{-1}}{2(\sigma-1)}}\cdot ( i^{-\frac{\sigma}{\sigma-1}})^{\frac{1}{2}+\frac{\sigma-(i^{-1}+1)}{2\sigma}} 
             \leq C. 
         \end{align}
         The inequality (4.16) implies that 
         \begin{align}
             \int_{B_{2^{i+1}}} u^{-\frac{\sigma}{\sigma-1}}F^{\frac{\sigma}{\sigma-1}}\leq C.
         \end{align}
         Letting $i \rightarrow \infty$, by Fatou’s lemma, we derive that 
         \begin{align}
             \int_M u^{-\frac{\sigma}{\sigma-1}}F^{\frac{\sigma}{\sigma-1}} \leq C.
         \end{align}
         Finally, from (4.2), we have
         \begin{align*}
             \int_{ B_{2^{i+1}}} u^{-\frac{\sigma}{\sigma-1}}F^{\frac{\sigma}{\sigma-1}} \leq  C \left(\int_{M\backslash B_{2^{i+1}}} u^{-\frac{\sigma}{\sigma-1}}F^{\frac{\sigma}{\sigma-1}}\right)^{\frac{(1+i^{-1})}{2\sigma}}.
         \end{align*}
         Letting $i \rightarrow \infty$, we conclude that
         \begin{align*}
             \int_{ M} u^{-\frac{\sigma}{\sigma-1}}F^{\frac{\sigma}{\sigma-1}}=0,
         \end{align*}
          which leads to a contradiction.
         {\qed} 
         
         ~\\
         $\mathbf{Proof\ of\ Theorem\ 1.7.}$ Note that $V(R)\leq CR^2\leq CR^{\frac{2\sigma}{\sigma-1}} \mathrm{ln}^{\frac{1}{\sigma-1}} R$ for $R$ large enough. As in the proof of Theorem 1.6, we have
         \begin{align*}
             \int_{ M} u^{-\frac{\sigma}{\sigma-1}}F^{\frac{\sigma}{\sigma-1}}=0.
         \end{align*}
         Therefore, $F(u)=0$, which yields that $ \Delta_b u\leq 0$. Let $f=\mathrm{ln}\ u$, then we have
         \begin{align}
             \Delta_b f=\frac{\Delta_bu}{u}-|\nabla_b f|^2.
         \end{align}
         Let $\varphi$ be the cut-off function defined as in Section 3, i.e.
         $$  \left\{
        \begin{array}{rcl}
         &\varphi=1    &  in\ B_R, \\ 
         &0\leq \varphi\leq 1,    & in\ B_{2R}, \\
         &\varphi=0,   &  in\ M \backslash B_{2R} , \\
          & |\nabla_b \varphi|\leq \frac{C}{R},  & in\ M. \\
        \end{array}
        \right.
        $$
        Multiplying both sides of (4.19) by $\varphi^s$ for $s>2$, we have
        \begin{align}
            \int_M \varphi^s|\nabla_b f|^2=&\int_M \varphi^s\frac{\Delta_bu}{u}-\int_M\varphi^s\Delta_b f \nonumber
            \\
            \leq & s\int_M\varphi^{s-1}\langle \nabla_b f,\nabla_b\varphi\rangle\nonumber
            \\
            \leq& s\int_M\varphi^{s-1} |\nabla_b f|\cdot |\nabla_b \varphi|.
        \end{align}
        Here we use $u>0 $ and $\Delta_b u\leq 0$. Applying Young's inequality, we obtain
        \begin{align}
            \int_M \varphi^s|\nabla_b f|^2\leq &s\int_M\varphi^{s-1} |\nabla_b f|\cdot |\nabla_b \varphi| \nonumber
            \\
            \leq &\epsilon\int_M \varphi^s|\nabla_b f|^2+C\int_M \varphi^{s-2}|\nabla_b \varphi|^2.\nonumber
        \end{align}
        Choosing $\epsilon$ small enough and $R$ large enough, we derive that
        \begin{align}
            \int_M \varphi^s|\nabla_b f|^2\leq &  C\int_{B_{2R}} \varphi^{s-2}|\nabla_b \varphi|^2\leq \frac{C}{R^2}V(2R)\leq C.
        \end{align}
         Letting $R\rightarrow \infty$, we have
        \begin{align}
            \int_M |\nabla_b f|^2\leq C.
        \end{align}
        Returning to (4.20), we find that
        \begin{align}
            \int_M \varphi^s|\nabla_b f|^2\leq &s\int_M\varphi^{s-1} |\nabla_b f|\cdot |\nabla_b \varphi|
            \nonumber\\
            \leq& C \left(\int_{M\backslash B_R} \varphi^s|\nabla_b f|^2\right)^{\frac{1}{2}}\cdot \left( \int_M \varphi^{s-2}|\nabla_b \varphi|^2 \right)^{\frac{1}{2}}
            \nonumber\\
            \leq & C \left(\int_{M\backslash B_R} \varphi^s|\nabla_b f|^2\right)^{\frac{1}{2}}.
        \end{align}
        That is, 
        \begin{align*}
            \int_{B_R} |\nabla_b f|^2\leq C \left(\int_{M\backslash B_R} |\nabla_b f|^2\right)^{\frac{1}{2}}.
        \end{align*}
        Letting $R\rightarrow \infty$ again, we conclude that $|\nabla_bf|=0$ and $u$ is a constant.
        \\
        {\qed}

        \section{Data availability }
        This article does not make use of any data sets.

        \section{ Acknowledgments}
        The author thanks Professor Yuxin Dong and Professor Xiaohua Zhu for their continued support and encouragement.

\bibliographystyle{siam}
\bibliography{ref}

\begin{thebibliography}{10}

\bibitem{ABB}
{\sc A.~Agrachev, D.~Barilari, and U.~Boscain}, {\em A comprehensive introduction to sub-{R}iemannian geometry}, vol.~181 of Cambridge Studies in Advanced Mathematics, Cambridge University Press, Cambridge, 2020.
\newblock From the Hamiltonian viewpoint, With an appendix by Igor Zelenko.

\bibitem{BGM}
{\sc F.~Baudoin, M.~Gordina, and P.~Mariano}, {\em On the {C}heng-{Y}au gradient estimate for {C}arnot groups and sub-{R}iemannian manifolds}, Proc. Amer. Math. Soc., 147 (2019), pp.~3181--3189.

\bibitem{BCC}
{\sc I.~Birindelli, I.~Capuzzo~Dolcetta, and A.~Cutr\`i}, {\em Liouville theorems for semilinear equations on the {H}eisenberg group}, Ann. Inst. H. Poincar\'e{} C Anal. Non Lin\'eaire, 14 (1997), pp.~295--308.

\bibitem{BP}
{\sc I.~Birindelli and J.~Prajapat}, {\em Nonlinear {L}iouville theorems in the {H}eisenberg group via the moving plane method}, Comm. Partial Differential Equations, 24 (1999), pp.~1875--1890.

\bibitem{CGS}
{\sc L.~A. Caffarelli, B.~Gidas, and J.~Spruck}, {\em Asymptotic symmetry and local behavior of semilinear elliptic equations with critical {S}obolev growth}, Comm. Pure Appl. Math., 42 (1989), pp.~271--297.

\bibitem{CLMR}
{\sc G.~Catino, Y.~Li, D.~D. Monticelli, and A.~Roncoroni}, {\em A liouville theorem in the {H}eisenberg group}, J. Eur. Math. Soc.,  (2025).

\bibitem{CM}
{\sc G.~Catino and D.~D. Monticelli}, {\em Semilinear elliptic equations on manifolds with nonnegative ricci curvature}, Journal of the European Mathematical Society,  (2024).

\bibitem{CMR}
{\sc G.~Catino, D.~D. Monticelli, and A.~Roncoroni}, {\em On the critical {$p$}-{L}aplace equation}, Adv. Math., 433 (2023), pp.~Paper No. 109331, 38.

\bibitem{CMRW}
{\sc G.~Catino, D.~D. Monticelli, A.~Roncoroni, and X.~Wang}, {\em Liouville theorems on pseudohermitian manifolds with nonnegative tanaka-webster curvature}, arXiv preprint arXiv:2412.08500,  (2024).

\bibitem{CY}
{\sc S.~Y. Cheng and S.~T. Yau}, {\em Differential equations on {R}iemannian manifolds and their geometric applications}, Comm. Pure Appl. Math., 28 (1975), pp.~333--354.

\bibitem{Ch}
{\sc W.~L. Chow}, {\em \"{U}ber {S}ysteme von linearen partiellen {D}ifferentialgleichungen erster {O}rdnung}, Math. Ann., 117 (1939), pp.~98--105.

\bibitem{DT}
{\sc S.~Dragomir and G.~Tomassini}, {\em Differential geometry and analysis on {CR} manifolds}, vol.~246 of Progress in Mathematics, Birkh\"{a}user Boston, Inc., Boston, MA, 2006.

\bibitem{GS}
{\sc B.~Gidas and J.~Spruck}, {\em Global and local behavior of positive solutions of nonlinear elliptic equations}, Comm. Pure Appl. Math., 34 (1981), pp.~525--598.

\bibitem{GSun}
{\sc A.~Grigor'yan and Y.~Sun}, {\em On nonnegative solutions of the inequality {$\Delta u+u^\sigma\leq0$} on {R}iemannian manifolds}, Comm. Pure Appl. Math., 67 (2014), pp.~1336--1352.

\bibitem{GSXX}
{\sc Q.~Gu, Y.~Sun, J.~Xiao, and F.~Xu}, {\em Global positive solution to a semi-linear parabolic equation with potential on {R}iemannian manifold}, Calc. Var. Partial Differential Equations, 59 (2020), pp.~Paper No. 170, 24.

\bibitem{HSZ}
{\sc J.~He, L.~Sun, and Y.~Wang}, {\em Optimal liouville theorems for the {L}ane-{E}mden equation on {R}iemannian manifolds}, arXiv preprint, arXiv:2411.06956,  (2024).

\bibitem{JL1}
{\sc D.~Jerison and J.~M. Lee}, {\em The {Y}amabe problem on {CR} manifolds}, J. Differential Geom., 25 (1987), pp.~167--197.

\bibitem{JL}
\leavevmode\vrule height 2pt depth -1.6pt width 23pt, {\em Extremals for the {S}obolev inequality on the {H}eisenberg group and the {CR} {Y}amabe problem}, J. Amer. Math. Soc., 1 (1988), pp.~1--13.

\bibitem{JL2}
\leavevmode\vrule height 2pt depth -1.6pt width 23pt, {\em Intrinsic {CR} normal coordinates and the {CR} {Y}amabe problem}, J. Differential Geom., 29 (1989), pp.~303--343.

\bibitem{LL}
{\sc P.~W.~Y. Lee and C.~Li}, {\em Bishop and {L}aplacian comparison theorems on {S}asakian manifolds}, Comm. Anal. Geom., 26 (2018), pp.~915--954.

\bibitem{LZ}
{\sc Y.~Li and L.~Zhang}, {\em Liouville-type theorems and {H}arnack-type inequalities for semilinear elliptic equations}, J. Anal. Math., 90 (2003), pp.~27--87.

\bibitem{MO}
{\sc X.-N. Ma and Q.~Ou}, {\em A {L}iouville theorem for a class semilinear elliptic equations on the {H}eisenberg group}, Adv. Math., 413 (2023), pp.~Paper No. 108851, 20.

\bibitem{MMP}
{\sc P.~Mastrolia, D.~D. Monticelli, and F.~Punzo}, {\em Nonexistence of solutions to parabolic differential inequalities with a potential on {R}iemannian manifolds}, Math. Ann., 367 (2017), pp.~929--963.

\bibitem{EP1}
{\sc E.~Mitidieri and S.~I. Pokhozhaev}, {\em Absence of global positive solutions of quasilinear elliptic inequalities}, Dokl. Akad. Nauk, 359 (1998), pp.~456--460.

\bibitem{EP2}
\leavevmode\vrule height 2pt depth -1.6pt width 23pt, {\em A priori estimates and the absence of solutions of nonlinear partial differential equations and inequalities}, Tr. Mat. Inst. Steklova, 234 (2001), pp.~1--384.

\bibitem{NS}
{\sc W.-M. Ni and J.~Serrin}, {\em Nonexistence theorems for quasilinear partial differential equations}, in Proceedings of the conference commemorating the 1st centennial of the {C}ircolo {M}atematico di {P}alermo ({I}talian) ({P}alermo, 1984), no.~8, 1985, pp.~171--185.

\bibitem{Re}
{\sc Y.~Ren}, {\em Gradient estimate of positive eigenfunctions of sub-laplacian on complete pseudo-hermitian manifolds}, Journal of Geometry and Physics, 149 (2020), p.~103577.

\bibitem{SZ}
{\sc J.~Serrin and H.~Zou}, {\em Cauchy-{L}iouville and universal boundedness theorems for quasilinear elliptic equations and inequalities}, Acta Math., 189 (2002), pp.~79--142.

\bibitem{St}
{\sc R.~S. Strichartz}, {\em Sub-{R}iemannian geometry}, J. Differential Geom., 24 (1986), pp.~221--263.

\bibitem{Ta}
{\sc N.~Tanaka}, {\em A differential geometric study on strongly pseudo-convex manifolds}, Lectures in Mathematics, Department of Mathematics, Kyoto University, No. 9, Kinokuniya Book Store Co., Ltd., Tokyo, 1975.

\bibitem{WZ}
{\sc B.~Wang and H.-C. Zhang}, {\em Liouville theorems for semilinear differential inequalities on sub-{R}iemannian manifolds}, J. Funct. Anal., 285 (2023), pp.~Paper No. 110007, 30.

\bibitem{We}
{\sc S.~M. Webster}, {\em Pseudo-{H}ermitian structures on a real hypersurface}, J. Differential Geometry, 13 (1978), pp.~25--41.

\bibitem{Xu}
{\sc L.~Xu}, {\em Semi-linear {L}iouville theorems in the {H}eisenberg group via vector field methods}, J. Differential Equations, 247 (2009), pp.~2799--2820.

\bibitem{Yu}
{\sc X.~Yu}, {\em Liouville type theorem in the {H}eisenberg group with general nonlinearity}, J. Differential Equations, 254 (2013), pp.~2173--2182.

\end{thebibliography}

~\\
  Biqiang Zhao
  \\
  $Beijing\ International\ Center\ for$
  \\
  $Mathematical\ Research $
\\
  $ Peking\ University$
\\
   $Beijing\ 100871 ,$ $P.R.\ China $

\end{document}